\DeclareMathAlphabet{\mathpzc}{OT1}{pzc}{m}{it}
\def\cA{\mathscr{A}}
\def\fA{\mathfrak{A}}
\def\fa{\mathfrak{a}}
\def\fb{\mathfrak{b}}
\def\fc{\mathfrak{c}}
\def\fd{\mathfrak{d}}
\def\sA{\mathsf{A}}
\def\sC{\mathsf{C}}
\def\sT{\mathsf{T}}
\def\sU{\mathsf{U}}
\def\sX{\mathsf{X}}
\def\sY{\mathsf{Y}}
\def\add{\operatorname{add}}
\def\adots{\mathinner{\mkern1mu\raise1.0pt\vbox{\kern7.0pt\hbox{.}}\mkern2mu\raise4.0pt\hbox{.}\mkern2mu\raise7.0pt\hbox{.}\mkern1mu}}
\def\dim{\operatorname{dim}}
\def\Ext{\operatorname{Ext}}
\def\nc{\operatorname{nc}}
\def\prod{\operatorname{prod}}
\def\P{\mathcal P}%
\def\F{\mathcal F}%
\def\G{\mathcal G}%
\newcommand{\Dfn}[1]{\emph{#1}} 
\numberwithin{equation}{section}
\newtheorem{Lemma}{Lemma}[section]
\newtheorem{Proposition}[Lemma]{Proposition}
\theoremstyle{definition}
\newtheorem{Definition}[Lemma]{Definition}
\newtheorem{Remark}[Lemma]{Remark}
\begin{document}

\setlength{\parindent}{0pt}
\setlength{\parskip}{7pt}

\title[Ptolemy diagrams and torsion pairs]{Ptolemy diagrams and torsion pairs in the cluster category of Dynkin type $A_n$}

\author{Thorsten Holm}
\address{Institut f\"{u}r Algebra, Zahlentheorie und Diskrete
  Mathematik, Fa\-kul\-t\"{a}t f\"{u}r Ma\-the\-ma\-tik und Physik, Leibniz
  Universit\"{a}t Hannover, Welfengarten 1, 30167 Hannover, Germany}
\email{holm@math.uni-hannover.de}
\urladdr{http://www.iazd.uni-hannover.de/\~{ }tholm}

\author{Peter J\o rgensen}
\address{School of Mathematics and Statistics,
Newcastle University, Newcastle upon Tyne NE1 7RU, United Kingdom}
\email{peter.jorgensen@ncl.ac.uk}
\urladdr{http://www.staff.ncl.ac.uk/peter.jorgensen}

\author{Martin Rubey}
\address{Institut f\"{u}r Algebra, Zahlentheorie und Diskrete
  Mathematik, Fa\-kul\-t\"{a}t f\"{u}r Ma\-the\-ma\-tik und Physik, Leibniz
  Universit\"{a}t Hannover, Welfengarten 1, 30167 Hannover, Germany}
\email{martin.rubey@math.uni-hannover.de}
\urladdr{http://www.iazd.uni-hannover.de/~rubey}


\thanks{{\em Acknowledgement. }This work has been carried out in the framework of the
  research priority programme SPP 1388 {\em Darstellungstheorie} of
  the Deutsche Forschungsgemeinschaft (DFG).  We gratefully acknowledge
  financial support through the grant HO 1880/4-1. }

\keywords{Clique, cluster algebra, cluster tilting object, generating
  function, recursively defined set, species, triangulated category}

\subjclass[2010]{05A15, 05E15, 13F60, 18E30}

\begin{abstract} 

  We give a complete classification of torsion pairs in the cluster
  category of Dynkin type $A_n$.  Along the way we give a new
  combinatorial description of Ptolemy diagrams, an infinite version
  of which was introduced by Ng in \cite{Ng}.  This allows us to count
  the number of torsion pairs in the cluster category of type $A_n$.
  We also count torsion pairs up to Auslander-Reiten translation.

\end{abstract}

\maketitle

\setcounter{section}{0}
\section{Introduction}
\label{sec:introduction}

Let $\cA$ be the cluster algebra of Dynkin type $A_n$, let $\sC$ be
the cluster category of Dynkin type $A_n$, and let $P$ be a (regular)
$(n+3)$-gon.  There are bijections between the following sets:
\begin{enumerate}

  \item  Clusters in $\cA$,

\smallskip

  \item  Cluster tilting objects in $\sC$,

\smallskip

  \item  Triangulations by non-crossing diagonals of $P$.

\end{enumerate}
See Caldero, Chapoton, and Schiffler \cite{CCS} and Iyama \cite{Iyama1}.

To place this in a larger context, note that if $u$ is a cluster
tilting object in $\sC$ and $\sU = \add(u)$ is the full subcategory
consisting of direct sums of direct summands of $u$, then $(\sU,\Sigma
\sU)$ is a so-called torsion pair by Keller and Reiten \cite[sec.\
2.1, prop.]{KellerReiten2}.  Here $\Sigma$ is the suspension functor
of the triangulated category $\sC$.  The triangulation on $\sC$ is due
to Keller \cite{Keller} and is based on the definition of $\sC$ as an
orbit category by Buan, Marsh, Reineke, Reiten, and Todorov
\cite{BMRRT}.

In this paper we widen the perspective by investigating general
torsion pairs in $\sC$.  A \Dfn{torsion pair} in a triangulated
category $\sT$ is a pair $(\sX,\sY)$ of full subcategories closed
under direct sums and direct summands such that
\begin{enumerate}

  \item the morphism space $\sT(x,y)$ is zero for $x \in \sX$, $y \in
  \sY$,

\smallskip

  \item each $t \in \sT$ sits in a distinguished triangle $x \rightarrow
  t \rightarrow y \rightarrow \Sigma x$ with $x \in \sX$, $y \in \sY$.

\end{enumerate}
This concept was introduced by Iyama and Yoshino in \cite[def.\
2.2]{IY}.  It is a triangulated version of the classical notion of a
torsion pair in an abelian category due to Dickson, see
\cite{Dickson}.  In the triangulated situation it has precursors in
the form of the t-structures of Beilinson, Bernstein, and Deligne,
where, additionally, one assumes $\Sigma \sX \subseteq \sX$ (see
\cite{BBD}), and the co-t-structures of Bondarko and Pauksztello
where, additionally, one assumes $\Sigma^{-1}\sX \subseteq \sX$ (see
\cite{Bondarko}, \cite{Pauksztello}).  Note that the terminology of
torsion pairs in triangulated categories was also employed by
Beligiannis and Reiten in \cite{BR}, but they used it as a synonym for
t-structures.

There has so far been little systematic investigation of torsion pairs
in triangulated categories, but Ng \cite{Ng} gave a complete
classification of torsion pairs in the cluster category of type
$A_{\infty}$ in terms of certain infinite combinatorial objects.  See
\cite{HJ} for details on this category.  In particular, Ng introduced
the Ptolemy condition which, when supplanted to the finite situation,
takes the following form: a Ptolemy diagram is a set of diagonals of a
finite polygon (with a distinguished oriented base edge) such that, if
the set contains crossing diagonals $\fa$ and $\fb$, then it contains
all diagonals which connect end points of $\fa$ and $\fb$.  See Figure
\ref{fig:Ptolemy} and Definition \ref{def:Ptolemy} below.
\begin{figure}
\[
  \begin{tikzpicture}[auto]
    \node[name=s, shape=regular polygon, regular polygon sides=20, minimum size=6cm, draw] {}; 
    \draw[thick] (s.corner 5) to node[very near start,below=20pt] {$\fa$} (s.corner 16);
    \draw[shift=(s.corner 5)] node[left] {$\alpha_1$};
    \draw[shift=(s.corner 16)] node[right] {$\alpha_2$};
    \draw[thick] (s.corner 9) to node[near start] {$\fb$} (s.corner 19);
    \draw[shift=(s.corner 9)] node[left] {$\beta_1$};
    \draw[shift=(s.corner 19)] node[right] {$\beta_2$};
    \draw[thick,dotted] (s.corner 5) to (s.corner 9);
    \draw[thick,dotted] (s.corner 5) to (s.corner 19);
    \draw[thick,dotted] (s.corner 16) to (s.corner 9);
    \draw[thick,dotted] (s.corner 16) to (s.corner 19);
  \end{tikzpicture} 
\]
\caption{The Ptolemy condition}
\label{fig:Ptolemy}
\end{figure}
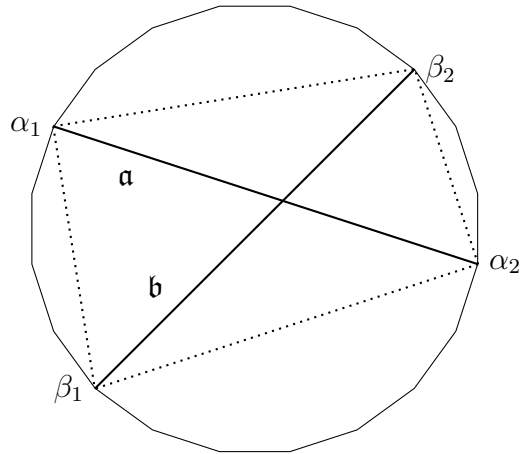

For instance, a polygon with no diagonals (an ``empty cell'') is a
Ptolemy diagram, as is a polygon with all diagonals (a ``clique'').
The triangle is the only Ptolemy diagram which is both an empty cell and
a clique.  If $A$ and $B$ are boundary edges of two Ptolemy diagrams,
then there is an obvious way of gluing $A$ to $B$ to obtain a new
Ptolemy diagram.  We will show the following classification result on
Ptolemy diagrams and torsion pairs.

{\bf Theorem A. }
{\em
\begin{enumerate}

  \item  There is a bijection between Ptolemy diagrams of the $(n+3)$-gon
  and torsion pairs in the cluster category $\sC$ of Dynkin type $A_n$.

\smallskip

  \item  Each Ptolemy diagram can be obtained by gluing empty cells and
    cliques.

\end{enumerate}
}

Note that a triangulation by non-crossing diagonals is a Ptolemy
diagram.  Under the bijection of part (i), it corresponds to a torsion
pair coming from a cluster tilting object.

Part (i) is a type $A_n$ analogue of Ng's classification, but our
proof is easier than hers because it uses the gluing in part (ii).
The gluing follows from the observation that if a diagonal in a
Ptolemy diagram crosses no other diagonal in the diagram, then it
divides the diagram into two smaller Ptolemy diagrams.  In fact, the
gluing can be organised so as to be unique, and this permits us to
prove the following counting result which, by virtue of part (i), also
counts torsion pairs in $\sC$.

{\bf Theorem B. }
{\em 
The number of Ptolemy diagrams of the $(n+3)$-gon is
$$
\frac{1}{n+2}\sum_{\ell\geq 0}%
2^\ell\binom{n+1+\ell}{\ell}\binom{2n+2}{n+1-2\ell}
$$
with the convention that the second binomial coefficient is $0$ for
$n+1-2\ell < 0$. 
}

The first few values, starting at $n = 0$, are
\begin{align*}
  &1, 4, 17, 82, 422, 2274, 12665, 72326, 421214, 2492112,\\
  &14937210,  90508256, 553492552, 3411758334, 21175624713, \\
  &132226234854, 830077057878,\dots
\end{align*}
This sequence may not have appeared previously in the literature.
Based on this paper, it is now item A181517 in the Online Encyclopedia
of Integer Sequences \cite{OEIS}.  Its asymptotic behaviour can be
determined explicitly, see Remark \ref{rmk:asymptotics}. 

We are also able to determine the generating function for Ptolemy
diagrams up to rotation, see Proposition \ref{pro:Martin}.  This
corresponds to counting torsion pairs up to Auslander-Reiten
translation.  The first few values are
\begin{align*}
  &1, 3, 5, 19, 62, 301, 1413, 7304, 38294, 208052, \\
  &1149018, 6466761, 36899604, 213245389, 1245624985, \\
  &7345962126, 43688266206,\dots
\end{align*}
Again it seems that this sequence was not encountered before.  It is
now item A181519 in the Online Encyclopedia of Integer Sequences.

K\"{o}hler \cite{Koehler} recently classified and counted thick
subcategories of triangulated ca\-te\-go\-ri\-es with finitely many
indecomposables.  This is the same as counting torsion pairs
$(\sX,\sY)$ in which $\sX$ and $\sY$ are triangulated subcategories;
these are known as stable t-structures.  One can show that the only
stable t-structures in the cluster category $\sC$ are $(\sC,0)$ and
$(0,\sC)$, so our results do not overlap with K\"{o}hler's.

\section*{Acknowledgement}
We are grateful to Christian Krattenthaler for a very useful
suggestion leading to the present formula in Theorem B which much
improves a previous version.  We also thank an anonymous referee
for reading the paper very carefully, correcting an error, and
suggesting several improvements to the presentation.  The diagrams
were typeset with TikZ.

\section{Characterizing torsion pairs combinatorially}

Let $P$ be an $(n+3)$-gon with a distinguished oriented edge which we
refer to as the \Dfn{distinguished base edge}.  We denote vertices
of the polygon by lower case Greek letters.  An \Dfn{edge} is a set of
two neighbouring vertices of the polygon.  A \Dfn{diagonal} is a set
of non-neighbouring vertices.  Two diagonals $\{\alpha_1,\alpha_2\}$
and $\{\beta_1,\beta_2\}$ \Dfn{cross} if their end points are all
distinct and come in the order $\alpha_1, \beta_1, \alpha_2, \beta_2$
when moving around the polygon in one direction or the other.  This
corresponds to an obvious notion of geometrical crossing.  Note that a
diagonal does not cross itself and that two diagonals sharing an end
point do not cross.

We recall the following from the introduction. 

\begin{Definition}
\label{def:Ptolemy}
Let $\fA$ be a set of diagonals in $P$.  Then $\fA$ is a \Dfn{Ptolemy
  diagram} if it has the following property: when $\fa = \{
\alpha_1,\alpha_2 \}$ and $\fb = \{ \beta_1,\beta_2 \}$ are crossing
diagonals in $\fA$, then those of $\{ \alpha_1,\beta_1\}$, $\{
\alpha_1,\beta_2 \}$, $\{ \alpha_2,\beta_1 \}$, $\{ \alpha_2,\beta_2
\}$ which are diagonals are in $\fA$.  See Figure \ref{fig:Ptolemy}.
\end{Definition}

Note that, because of the distinguished base edge which we draw in
bold, the two Ptolemy diagrams in Figure \ref{fig:squares} are
distinct.
\begin{figure}
\[
  \begin{tikzpicture}[auto]
    \node[name=s, shape=regular polygon, regular polygon sides=4, minimum size=5cm, draw] at (0,0) {}; 
    \draw[thick] (s.corner 1) to (s.corner 3);
    \draw[ultra thick,black] (s.corner 3) to (s.corner 4);

    \node[name=s, shape=regular polygon, regular polygon sides=4, minimum size=5cm, draw] at (5,0) {}; 
    \draw[thick] (s.corner 2) to (s.corner 4);
    \draw[ultra thick,black] (s.corner 3) to (s.corner 4);
  \end{tikzpicture} 
\]
\caption{Two different Ptolemy diagrams}
\label{fig:squares}
\end{figure}
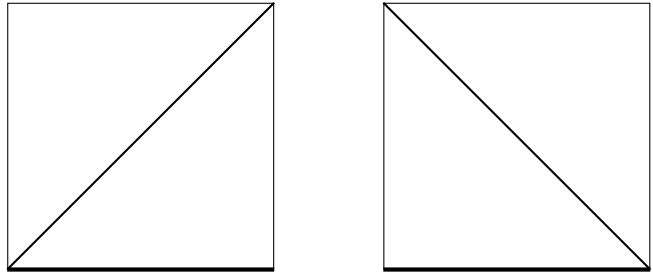

Let $\sC$ be the cluster category of type $A_n$.  There is a bijection
between indecomposable objects of $\sC$ and diagonals of $P$.  We use
lower case roman letters for (indecomposable) objects of $\sC$ and
lower case fraktur letters for the corresponding diagonals.  The
suspension functor $\Sigma$ acts on (indecomposable) objects and hence
on diagonals; the action on diagonals is rotation by one vertex.  Note
that $\Sigma$ is equal to the Auslander-Reiten translation of $\sC$
since $\sC$ is $2$-Calabi-Yau.  We have
\begin{equation}
\label{equ:Ext}
  \dim \Ext_{\sC}^1(a,b) =
  \left\{
    \begin{array}{cl}
      1 & \mbox{if $\fa$ and $\fb$ cross}, \\
      0 & \mbox{otherwise,}
    \end{array}
  \right.
\end{equation}
see \cite{CCS}.

The bijection between indecomposable objects of $\sC$ and diagonals of
$P$ extends to a bijection between subcategories of $\sC$ closed under
direct sums and direct summands, and sets of diagonals of $P$.  We use
upper case sans serif letters for subcategories and upper case fraktur
letters for the corresponding sets of diagonals.  The suspension
functor acts on diagonals and hence on sets of diagonals.

\begin{Definition}
If $\fA$ is a set of diagonals, then
\[
  \nc \fA = \{\, \mbox{$\fb$ is a diagonal of $P$}
                 \,\mid\, \mbox{$\fb$ crosses no diagonal in $\fA$} \,\}.
\]
\end{Definition}

Figure \ref{fig:blue_and_red} is an example where $\fA$ consists of
the solid diagonals and $\nc \fA$ of the dotted ones.  Note that this is
not a Ptolemy diagram.
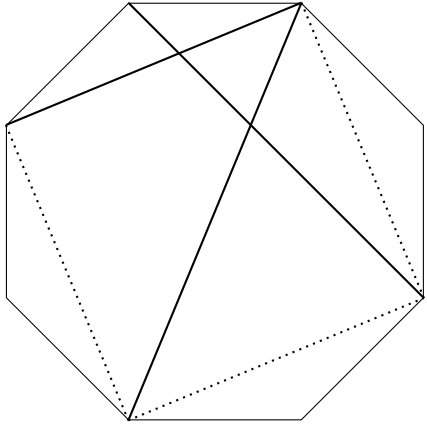
\begin{figure}
\[
  \begin{tikzpicture}[auto]
    \node[name=s, shape=regular polygon, regular polygon sides=8, minimum size=6cm, draw] {}; 
    \draw[thick] (s.corner 1) to (s.corner 3);
    \draw[thick] (s.corner 1) to (s.corner 5);
    \draw[thick] (s.corner 2) to (s.corner 7);
    \draw[thick,dotted] (s.corner 1) to (s.corner 7);
    \draw[thick,dotted] (s.corner 3) to (s.corner 5);
    \draw[thick,dotted] (s.corner 5) to (s.corner 7);
  \end{tikzpicture} 
\]
\caption{The dotted diagonals are $\nc$ of the solid diagonals}
\label{fig:blue_and_red}
\end{figure}
In the example, $\fA$ and $\nc \fA$ are disjoint but this is not
always the case since a diagonal does not cross itself.

Let $\sA$ be a subcategory of $\sC$ closed under direct sums and
direct summands.  We define the perpendicular subcategories by
\begin{align*}
  {}^{\perp}\sA & = \{\, c \in \sC \,|\, \sC(c,a) = 0 \;\mbox{for each}\; a \in \sA \,\}, \\
  \sA^{\perp} & = \{\, c \in \sC \,|\, \sC(a,c) = 0 \;\mbox{for each}\; a \in \sA \,\}.
\end{align*}
If $\sA$ corresponds to the set of diagonals $\fA$, then Equation
\eqref{equ:Ext} implies that ${}^{\perp}\sA$ corresponds to
$\Sigma^{-1}\nc \fA$ and $\sA^{\perp}$ corresponds to $\Sigma \nc
\fA$; this follows using $\sC(c,\Sigma d) = \Ext^1_{\sC}(c,d)$.  Note
that the operator $\nc$ commutes with $\Sigma$ and $\Sigma^{-1}$.

\begin{Proposition}
\label{pro:A}
The following are equivalent for a subcategory $\sA$ of $\sC$ which is
closed under direct sums and direct summands.
\begin{enumerate}

  \item  $\sA$ is closed under extensions, that is, if $a_1 ,a_2 \in
    \sA$ and $a_1 \rightarrow b \rightarrow a_2 \rightarrow
    \Sigma a_1$ is a distinguished triangle of $\sC$, then $b \in
    \sA$. 

\smallskip

  \item  $(\sA,\sA^{\perp})$ is a torsion pair.

\smallskip

  \item  $\sA = {}^{\perp}(\sA^{\perp})$.

\smallskip

  \item $\fA = \nc \nc \fA$.

\end{enumerate}
\end{Proposition}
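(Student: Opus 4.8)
The plan is to prove the cycle of implications $(i)\Rightarrow(ii)\Rightarrow(iii)\Rightarrow(iv)\Rightarrow(i)$, translating freely between subcategories and their sets of diagonals via the dictionary set up before the statement. Recall the key facts I may use: the bijection between subcategories closed under sums and summands and sets of diagonals; Equation \eqref{equ:Ext}, which says $\Ext^1_\sC(a,b)\neq 0$ iff $\fa$ and $\fb$ cross; the identifications ${}^{\perp}\sA\leftrightarrow\Sigma^{-1}\nc\fA$ and $\sA^{\perp}\leftrightarrow\Sigma\nc\fA$; and the fact that $\nc$ commutes with $\Sigma^{\pm 1}$. Since $\sC$ is $2$-Calabi--Yau, $\sC(x,y)\cong D\Ext^1_\sC(x,\Sigma^{-1}y)\cong D\sC(y,\Sigma x)$, so the Hom-vanishing condition (i) of a torsion pair is automatically symmetric; this is what makes $(\sA,\sA^\perp)$ a candidate torsion pair as soon as we know the existence of the approximation triangles.

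First, $(i)\Rightarrow(ii)$. Assuming $\sA$ is extension-closed, I need to produce for each $t\in\sC$ a triangle $a\to t\to a'\to\Sigma a$ with $a\in\sA$, $a'\in\sA^\perp$; the Hom-vanishing $\sC(\sA,\sA^\perp)=0$ is immediate from the definition of $\sA^\perp$. For the triangle, since $\sC$ is $\Hom$-finite and Krull--Schmidt with only finitely many indecomposables, I can take a right $\sA$-approximation $a\xrightarrow{f} t$ (a sum of maps from summands of $\sA$ spanning $\sC(\sA,t)$), complete it to a triangle $a\to t\to a'\to\Sigma a$, and check $a'\in\sA^\perp$: applying $\sC(a_1,-)$ for $a_1\in\sA$ to the triangle, surjectivity of $\sC(a_1,a)\to\sC(a_1,t)$ (approximation property) forces $\sC(a_1,a')$ to inject into $\sC(a_1,\Sigma a)$, and the image lands in the part killed by the connecting map; one then uses $2$-Calabi--Yau duality $\sC(a_1,\Sigma a)\cong D\sC(a,\Sigma a_1)$ together with extension-closedness of $\sA$ (equivalently, that the triangle $a\to\text{(cone)}\to a'$ would otherwise produce an object of $\sA$ strictly containing $a$ in its approximation data) to conclude $\sC(a_1,a')=0$. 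This is the standard Iyama--Yoshino argument and I expect it to go through verbatim; it is the least combinatorial step.

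Next, $(ii)\Rightarrow(iii)$. By definition of torsion pair, $\sC(\sA,\sA^\perp)=0$ gives $\sA\subseteq{}^{\perp}(\sA^\perp)$. For the reverse inclusion, take $c\in{}^{\perp}(\sA^\perp)$ and use the approximation triangle $a\to c\to a'\to\Sigma a$ from (ii) with $a\in\sA$, $a'\in\sA^\perp$; applying $\sC(c,-)$ and using $\sC(c,a')=0$ shows $c\to c$ factors through $a\to c$, so $c$ is a summand of $a$, hence $c\in\sA$ as $\sA$ is closed under summands. Then $(iii)\Rightarrow(iv)$ is pure dictionary: by the identifications above, $\sA^\perp\leftrightarrow\Sigma\nc\fA$ and then ${}^{\perp}(\sA^\perp)\leftrightarrow\Sigma^{-1}\nc(\Sigma\nc\fA)=\Sigma^{-1}\Sigma\nc\nc\fA=\nc\nc\fA$, using that $\nc$ commutes with $\Sigma$; so $\sA={}^{\perp}(\sA^\perp)$ translates to $\fA=\nc\nc\fA$.

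Finally, $(iv)\Rightarrow(i)$. Suppose $\fA=\nc\nc\fA$ and let $a_1\to b\to a_2\to\Sigma a_1$ be a triangle with $a_1,a_2\in\sA$; I must show $b\in\sA$, i.e.\ $\fb\in\nc\nc\fA$. Since $\sA$ need not be indecomposable-closed only formally but $b$ may be decomposable, it suffices to treat each indecomposable summand $b_0$ of $b$, whose diagonal $\fb_0$ I must show lies in $\nc\nc\fA=\fA$; equivalently I must show: every $\fc\in\nc\fA$ fails to cross $\fb_0$. Fix such a $\fc$, so by \eqref{equ:Ext} $\Ext^1_\sC(c,a)=\Ext^1_\sC(a,c)=0$ for all $a\in\sA$, which by $2$-Calabi--Yau means $\sC(c,\Sigma a)=\sC(\Sigma a,\Sigma^2 c)\cong\sC(a,\Sigma c)=0$ too — more usefully, $\sC(c,\Sigma^{\pm1}a_i)=0$ and $\sC(a_i,\Sigma^{\pm1}c)=0$. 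Applying $\sC(c,-)$ to the triangle gives an exact sequence $\sC(c,a_1)\to\sC(c,b)\to\sC(c,a_2)$ and $\sC(c,\Sigma^{-1}a_2)\to\sC(c,a_1)\to\sC(c,b)$; I will instead apply $\Ext^1_\sC(c,-)=\sC(c,\Sigma-)$ directly: $\sC(c,\Sigma a_1)\to\sC(c,\Sigma b)\to\sC(c,\Sigma a_2)$ has outer terms zero, so $\Ext^1_\sC(c,b)=0$, i.e.\ $\fc$ does not cross $\fb$, hence does not cross $\fb_0$. This shows $\fb_0\in\nc\nc\fA=\fA$ for every summand, so $b\in\sA$.

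The main obstacle I anticipate is the existence of the approximation triangle in $(i)\Rightarrow(ii)$ — specifically, verifying cleanly that the cone of a right $\sA$-approximation lands in $\sA^\perp$, which is where extension-closedness and the $2$-Calabi--Yau property both get used; the combinatorial implications $(iii)\Leftrightarrow(iv)$ and $(iv)\Rightarrow(i)$ are then essentially formal once \eqref{equ:Ext} and the commutation of $\nc$ with $\Sigma$ are in hand. An alternative, avoiding the approximation argument entirely, is to prove $(i)\Leftrightarrow(iv)$ directly on the combinatorial side and cite \cite{IY} for $(i)\Leftrightarrow(ii)\Leftrightarrow(iii)$ in any $2$-Calabi--Yau category of this type; but spelling out the self-contained argument above is preferable.
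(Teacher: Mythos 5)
Your proposal is correct, and it is essentially the paper's proof with the implications arranged in a slightly different cycle. The paper proves $(i)\Rightarrow(ii)\Rightarrow(iii)$ by citing \cite{IY}, then closes the loop with a direct argument that ${}^{\perp}\sX$ is always extension-closed (giving $(iii)\Rightarrow(i)$), and separately establishes $(iii)\Leftrightarrow(iv)$ via the dictionary $\sA^{\perp}\leftrightarrow\Sigma\nc\fA$, ${}^{\perp}\sA\leftrightarrow\Sigma^{-1}\nc\fA$. You instead close the loop through $(iv)\Rightarrow(i)$, which — once translated back through Equation \eqref{equ:Ext} — is the same long-exact-sequence-with-vanishing-outer-terms argument the paper uses for $(iii)\Rightarrow(i)$, just applied to $\sC(c,\Sigma-)$ with $c$ running over $\nc\sA$ rather than to $\sC(-,x)$ with $x$ running over $\sA^{\perp}$; by $2$-Calabi--Yau duality and the shift relating $\nc\fA$ to $\sA^{\perp}$, these are literally dual presentations of one calculation. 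You also spell out $(ii)\Rightarrow(iii)$ (the split-triangle argument) where the paper simply cites the remarks after \cite[def.~2.2]{IY}. One small caution: your sketch of $(i)\Rightarrow(ii)$ is hand-wavy at the key Wakamatsu-type step showing the cone of a right $\sA$-approximation lies in $\sA^{\perp}$ — the parenthetical about ``producing an object of $\sA$ strictly containing $a$'' is not a complete argument — but you explicitly defer to \cite{IY} there, which is what the paper does too. Also note that neither $(i)\Rightarrow(ii)$ nor the symmetry of the Hom-vanishing condition actually requires the $2$-Calabi--Yau property; the former is a general fact about extension-closed contravariantly finite subcategories (as \cite[prop.~2.3]{IY} records) and the latter follows from the axioms of a torsion pair alone.
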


\begin{proof}
(i)$\Rightarrow$(ii) holds by \cite[prop.\ 2.3(1)]{IY} since $\sA$
is contravariantly finite because it has only finitely many
indecomposable objects.  (Indeed, $\sC$ itself has only finitely many
indecomposable objects.)

(ii)$\Rightarrow$(iii) holds by the remarks following \cite[def.\
2.2]{IY}. 

(iii)$\Rightarrow$(i): If $\sX$ is any full subcategory of $\sC$ then
${}^{\perp}\sX$ is closed under extensions.  Namely, if $a_1, a_2 \in
{}^{\perp}\sX$ and $a_1 \rightarrow b \rightarrow a_2 \rightarrow
\Sigma a_1$ is a distinguished triangle, then each $x \in \sX$ gives
an exact sequence $\sC( a_2 , x ) \rightarrow \sC( b , x ) \rightarrow
\sC( a_1 , x)$.  The outer terms are $0$, so $\sC( b , x ) = 0$ whence
$b \in {}^{\perp}\sX$.

(iii)$\Leftrightarrow$(iv) follows from the remarks before the proposition
by which $\sA^{\perp}$ corresponds to $\Sigma \nc \fA$ and
${}^{\perp}(\sA^{\perp})$ corresponds to $\Sigma^{-1} \nc(\Sigma \nc
\fA) = \nc \nc \fA$.
\end{proof}

\begin{Remark}
\label{rmk:IY}
Note that by an easy argument, in a torsion pair $(\sX,\sY)$ we always
have $\sY = \sX^{\perp}$; see \cite[def.\ 2.2]{IY}.  It follows that
every torsion pair in $\sC$ has the form $(\sA,\sA^{\perp})$ for one
of the subcategories $\sA$ in Proposition \ref{pro:A}.  By the
proposition, there is hence a bijection between torsion pairs in $\sC$
and sets of diagonals $\fA$ with $\fA = \nc \nc \fA$.
\end{Remark}

Let $\P$ be the set of Ptolemy diagrams in polygons of any size with a
distinguished base edge.  For convenience, we will consider the edges
of the polygon to be part of a Ptolemy diagram.  Moreover, $\P$
includes the degenerate Ptolemy diagram consisting of two vertices and
the distinguished base edge.  We give a different (global) description
of Ptolemy diagrams by establishing a recursive combinatorial equation
for $\P$.

Recall that a polygon with no diagonals is called an \Dfn{empty cell}
and that a polygon with all diagonals is called a \Dfn{clique}; these
are both Ptolemy diagrams.

\begin{Proposition}
\label{pro:decomposition}
The set $\P$ is recursively given as the disjoint union of
\begin{enumerate}

\item the degenerate Ptolemy diagram,

\smallskip

\item an empty cell with at least three edges, one of which is the
  distinguished base edge, where we have glued onto each other edge
  an element of $\P$ along its distinguished base edge,

\smallskip

\item a clique with at least four edges, one of which is the
  distinguished base edge, where we have glued onto each other
  edge an element of $\P$ along its distinguished base edge.

\end{enumerate}
These types correspond to the three parts of the right hand side of
the equation in Figure~\ref{fig:decomposition}.  In particular, a
Ptolemy diagram can be decomposed completely into Ptolemy diagrams
which are either empty cells or cliques.
\end{Proposition}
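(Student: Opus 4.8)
The plan is to prove Proposition~\ref{pro:decomposition} by exhibiting, for every Ptolemy diagram $\fA$, a canonical way to split it according to cases (i), (ii), (iii), and then checking that the pieces obtained are themselves Ptolemy diagrams and that distinct Ptolemy diagrams yield distinct data, so that the union on the right is genuinely disjoint and the recursion is well-founded.

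\textbf{Setting up the trichotomy.} First I would dispose of the degenerate case (i) and assume $\fA$ lives in an $(n+3)$-gon $P$ with $n\geq 1$, so $P$ has at least four edges and at least two vertices besides the endpoints of the distinguished base edge. Let $\fd$ be the diagonal that, together with the distinguished base edge, cuts off the smallest possible ``ear''; more precisely, among all diagonals of $\fA$ incident to one of the two endpoints of the base edge, or — if none exist — the base edge being treated as a limiting diagonal, I want to single out a distinguished diagonal that will be the base edge of the first glued-on piece. The cleanest route is instead to look at the distinguished base edge $B = \{\gamma_0,\gamma_1\}$ and let $\gamma$ be the third vertex of the unique ``cell'' of the diagram adjacent to $B$: concretely, walk from $\gamma_1$ around $P$ and let $\{\gamma_1,\gamma\}$ be the ``last'' diagonal in $\fA$ emanating from $\gamma_1$ that is crossed by no diagonal of $\fA$ crossing it on the $\gamma_0$-side — but the simplest formulation is: the cell containing $B$ is bounded by $B$ together with a set of edges and uncrossed-within-that-cell diagonals, and the vertices $\gamma_0 = v_0, v_1, \dots, v_k = \gamma_1$ encountered are its corners. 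If $k = 1$ this cell is degenerate (impossible for $n\geq1$); if the boundary of this cell contains a diagonal, that cell is either an empty cell (case (ii)) — no diagonals of $\fA$ strictly inside it — or, I claim, must be a clique (case (iii)).

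\textbf{The key dichotomy: empty cell versus clique.} The heart of the argument is to show that the cell adjacent to $B$ contains \emph{no} diagonal of $\fA$ that crosses another, OR it contains \emph{all} its internal diagonals. Suppose the cell has corners $v_0,\dots,v_k$ (so its bounding diagonals are among $\{v_i,v_{i+1}\}$ and $\{v_0,v_k\}=B$), and suppose some $\fa=\{v_i,v_j\}$ and $\fb=\{v_p,v_q\}$ in $\fA$ cross inside it, with $v_i, v_p, v_j, v_q$ in cyclic order along the cell. By the Ptolemy condition \ref{def:Ptolemy}, all four ``connecting'' segments $\{v_i,v_p\},\{v_i,v_q\},\{v_j,v_p\},\{v_j,v_q\}$ that are diagonals lie in $\fA$. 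I then want to bootstrap: given that a convex sub-polygon on vertices $v_a,\dots,v_b$ (a contiguous arc of cell-corners) has all its diagonals in $\fA$, adding one more crossing pair forces, via repeated Ptolemy, the complete graph on a larger arc. A short induction on the number of corners, using that $B$ itself is one of the bounding segments (so $v_0$ and $v_k$ get connected to everything), should give that \emph{all} diagonals among $v_0,\dots,v_k$ lie in $\fA$, i.e.\ the cell is a clique on $\geq 4$ vertices. This is the step I expect to be the main obstacle: making precise what ``the cell adjacent to $B$'' is (one must argue its boundary is well-defined, consisting of $B$ plus the diagonals of $\fA$ ``visible'' from $B$ with nothing of $\fA$ between), and then nailing the induction that crossing-implies-complete. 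Care is needed because the connecting segments might be edges of $P$ rather than diagonals, and one must check that doesn't break the inductive step.

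\textbf{Assembling the recursion.} Once the cell adjacent to $B$ is known to be an empty cell or a clique with corners $v_0,\dots,v_k$, each side $\{v_i,v_{i+1}\}$ that is a genuine diagonal of $P$ cuts off a sub-polygon $P_i$, and I set $\fA_i := \{\,\fc\in\fA : \fc$ is a diagonal of $P_i\,\}$ with distinguished base edge $\{v_i,v_{i+1}\}$. I must check: (a) each $\fA_i$ is a Ptolemy diagram — immediate, since the Ptolemy condition is inherited by restriction to a sub-polygon, as any crossing pair in $\fA_i$ crosses already in $\fA$ and all relevant connectors lie in $P_i$; (b) $\fA$ is recovered from the cell-type plus the tuple $(\fA_i)$ — this needs that no diagonal of $\fA$ straddles two different $P_i$'s, which holds because such a diagonal would cross one of the bounding segments $\{v_i,v_{i+1}\}$, and by the Ptolemy condition that crossing would force a connector contradicting maximality of the chosen cell (i.e.\ it would put a diagonal of $\fA$ inside the cell adjacent to $B$, either refuting ``empty cell'' or — in the clique case — already accounted for); (c) the decomposition is deterministic, so the union in Figure~\ref{fig:decomposition} is disjoint. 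Finally, since each $P_i$ is strictly smaller than $P$, the recursion terminates, and iterating yields the last sentence: every Ptolemy diagram decomposes completely into empty cells and cliques. I would close by remarking that the degenerate diagram is the base case, and that Figure~\ref{fig:decomposition} is exactly the pictorial form of this case split.
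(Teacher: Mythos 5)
Your overall plan — identify the region of the Ptolemy diagram adjacent to the distinguished base edge $B=\{\alpha,\beta\}$, show it is either an empty cell or a clique, then recurse on the pieces glued to its remaining sides — is exactly the paper's strategy, and your observations (a), (b), (c) about inheritance, non-straddling, and determinism all appear there in some form. However, you have correctly flagged, but not filled, the one genuine gap: \emph{you never define the region}. Your description of it (``bounded by $B$ together with a set of edges and uncrossed-within-that-cell diagonals'') is circular, and the obvious candidate — the geometric face of the arrangement of diagonals that touches $B$ — is the wrong object, since that face is always empty (in the clique case it is just a small triangle, not the whole clique region). Without a precise, non-circular definition of the region, the dichotomy ``empty cell vs.\ clique'' is not even a statement, and the bootstrap induction you sketch has nothing well-defined to induct on.

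The paper avoids this trap by inverting the order of the two decisions. Rather than defining the region first and then asking which type it is, it first decides the type by a clean dichotomy that refers only to $\alpha$ and $\beta$: either there exist crossing diagonals $\fa,\fb\in\fA$ with $\fa$ incident to $\alpha$ and $\fb$ incident to $\beta$, or there do not. In the latter case the region is defined as the polygon bounded by $B$ and a chain $\alpha=\gamma_0,\gamma_1,\dots,\gamma_m,\gamma_{m+1}=\beta$ of consecutive arcs of $\fA$ with $m$ \emph{minimal}; one then shows by cases (using the Ptolemy condition) that no diagonal of $\fA$ enters its interior, so it is an empty cell. In the former case the region is defined as the convex hull of the vertices $\delta$ joined to \emph{both} $\alpha$ and $\beta$ by arcs of $\fA$; the Ptolemy condition gives that these $\delta$'s form a clique and that no diagonal of $\fA$ crosses its bounding sides $\{\delta_j,\delta_{j+1}\}$. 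In each branch the region is defined by an extremal/closure condition, not by reference to itself, which is precisely what your sketch is missing. If you adopt that dichotomy and those two explicit definitions, the remainder of your outline (inheritance of the Ptolemy condition to sub-polygons, non-straddling of separating diagonals, uniqueness of the split) goes through essentially as you describe.
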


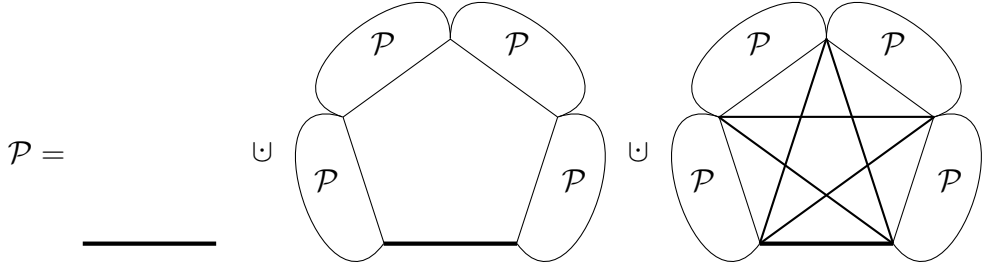
\begin{figure}
 \centering
  \begin{tikzpicture}[auto]
    \node[name=3] at (-0.5,0) {$\P = $};

    \node[name=s, shape=regular polygon, regular polygon sides=5, minimum size=3cm] at (1,0) {}; 
    \draw[ultra thick,black] (s.corner 3) to (s.corner 4);

    \node[name=s] at (2.5,0) {$\ensuremath{\mathaccent\cdot\cup}$};

    \node[name=s, shape=regular polygon, regular polygon sides=5, minimum size=3cm, draw] at (5,0) {}; 
    \draw[ultra thick,black] (s.corner 3) to (s.corner 4);
    \draw[out=306,in=18,looseness=2] (s.corner 4) to (s.corner 5);
    \draw[shift=(s.side 4)] node[right=5pt] {$\P$};
    \draw[out=18,in=90,looseness=2] (s.corner 5) to (s.corner 1);
    \draw[shift=(s.side 5)] node[above=5pt] {$\;\;\;\P$};
    \draw[out=90,in=162,looseness=2] (s.corner 1) to (s.corner 2);
    \draw[shift=(s.side 1)] node[above=5pt] {$\P\;\;\;$};
    \draw[out=162,in=234,looseness=2] (s.corner 2) to (s.corner 3);
    \draw[shift=(s.side 2)] node[left=5pt] {$\P$};

    \node[name=s] at (7.5,0) {$\ensuremath{\mathaccent\cdot\cup}$};

    \node[name=s, shape=regular polygon, regular polygon sides=5, minimum size=3cm, draw] at (10,0) {}; 
    \draw[ultra thick,black] (s.corner 3) to (s.corner 4);
    \draw[out=306,in=18,looseness=2] (s.corner 4) to (s.corner 5);
    \draw[shift=(s.side 4)] node[right=5pt] {$\P$};
    \draw[out=18,in=90,looseness=2] (s.corner 5) to (s.corner 1);
    \draw[shift=(s.side 5)] node[above=5pt] {$\;\;\;\P$};
    \draw[out=90,in=162,looseness=2] (s.corner 1) to (s.corner 2);
    \draw[shift=(s.side 1)] node[above=5pt] {$\P\;\;\;$};
    \draw[out=162,in=234,looseness=2] (s.corner 2) to (s.corner 3);
    \draw[shift=(s.side 2)] node[left=5pt] {$\P$};
    \draw[thick] (s.corner 1) to (s.corner 3);
    \draw[thick] (s.corner 1) to (s.corner 4);
    \draw[thick] (s.corner 2) to (s.corner 4);
    \draw[thick] (s.corner 2) to (s.corner 5);
    \draw[thick] (s.corner 3) to (s.corner 5);

  \end{tikzpicture} 
  \caption{The decomposition of the set of Ptolemy diagrams with
    a distinguished base edge.}
  \label{fig:decomposition}
\end{figure}

\begin{proof}
  It is clear that the sets (i), (ii), and (iii) are disjoint.

  Let a non-degenerate Ptolemy diagram $\fA$ be given with
  distinguished base edge $\{ \alpha,\beta \}$.  We will show that
  $\fA$ is either of type (ii) or type (iii).  For convenience, we
  will consider the vertices of the polygon to be ordered in an obvious
  way, starting with $\alpha$ and ending with $\beta$.

  Type (ii): Suppose that there do not exist crossing diagonals $\fa$
  and $\fb$ in $\fA$ ending in $\alpha$, respectively $\beta$.  We
  will show that $\fA$ is of type (ii).

  Consider increasing sequences of vertices $\alpha$, $\gamma_1$,
  $\ldots$, $\gamma_m$, $\beta$ with $m \geq 1$ for which the edges
  and diagonals
\[
  \{ \alpha,\gamma_1 \}, \{ \gamma_1,\gamma_2 \}, 
  \ldots, \{ \gamma_{m-1},\gamma_m \}, \{ \gamma_m,\beta \} 
\]
  are in $\fA$, and choose a sequence with $m$ minimal.  For ease of
  notation write $\gamma_0 = \alpha$ and $\gamma_{m+1} = \beta$.  The
  displayed edges and diagonals along with the distinguished base edge
  $\{ \alpha, \beta \}$ bound a region $C$.

  We show that $\fA$ is of type (ii) by showing that no diagonal in
  $\fA$ intersects the interior of $C$: then $C$ is an empty cell and
  each $\{ \gamma_j, \gamma_{j+1} \}$ with $0 \leq j \leq m$ divides
  $C$ from a (smaller) Ptolemy diagram; see
  Figure~\ref{fig:decomposition}.  Note that each smaller Ptolemy
  diagram is clearly uniquely determined.

  Suppose that $\fA$ does contain a diagonal $\{ \epsilon_1,\epsilon_2
  \}$ intersecting the interior of $C$.  We can assume $\epsilon_1 <
  \epsilon_2$.  There are three cases, each leading to a
  contradiction.
\begin{enumerate}

  \item[(a)] $\epsilon_1$ and $\epsilon_2$ are among the $\gamma_i$.
  Then $\epsilon_1 = \gamma_{j-1}$ and $\epsilon_2 = \gamma_{k+1}$
  where $1 \leq j \leq k \leq m$.  This contradicts that $m$ is
  minimal.

\smallskip

  \item[(b)] One of $\epsilon_1$ and $\epsilon_2$ is among the
  $\gamma_i$ and the other is not, see Figure \ref{fig:empty}.  By
  symmetry we can assume $\epsilon_1 = \gamma_{j-1}$ and $\gamma_k <
  \epsilon_2 < \gamma_{k+1}$ with $1 \leq j \leq k \leq m$.  The
  diagonals $\{ \epsilon_1, \epsilon_2 \} = \{ \gamma_{j-1},
  \epsilon_2 \}$ and $\{ \gamma_k, \gamma_{k+1} \}$ cross.  By the
  Ptolemy condition $\fc = \{ \gamma_{j-1}, \gamma_{k+1} \}$ is in
  $\fA$.

\smallskip
\noindent
  If $\fc$ intersects the interior of $C$ then we are in case (a).  If
  it does not, then we must have $\gamma_{j-1} = \alpha$ and
  $\gamma_{k+1} = \beta$.  But then there are crossing diagonals $\fa =
  \{ \alpha, \epsilon_2 \} = \{ \gamma_{j-1}, \epsilon_2 \} = \{
  \epsilon_1, \epsilon_2 \}$ and $\fb = \{ \beta, \gamma_k \} = \{
  \gamma_k, \gamma_{k+1} \}$ ending in $\alpha$, respectively $\beta$,
  contradicting our assumption on $\fA$.

\smallskip

  \item[(c)] $\epsilon_1$ and $\epsilon_2$ are not among the
  $\gamma_i$, see Figure \ref{fig:empty}.  Then $\gamma_{j-1} <
  \epsilon_1 < \gamma_j$ and $\gamma_k < \epsilon_2 < \gamma_{k+1}$
  for some $1 \leq j \leq k \leq m$.  The diagonal $\{ \epsilon_1,
  \epsilon_2 \}$ crosses each of the diagonals $\{ \gamma_{j-1},
  \gamma_j \}$ and $\{ \gamma_k, \gamma_{k+1} \}$, so by the Ptolemy
  condition each of the diagonals $\{ \epsilon_1, \gamma_{k+1} \}$ and
  $\{ \gamma_{j-1}, \epsilon_2 \}$ is in $\fA$.  These diagonals
  cross, so by the Ptolemy condition $\fc = \{ \gamma_{j-1},
  \gamma_{k+1} \}$ is in $\fA$.  Now conclude the argument by using
  the second paragraph of (b).

\end{enumerate}
  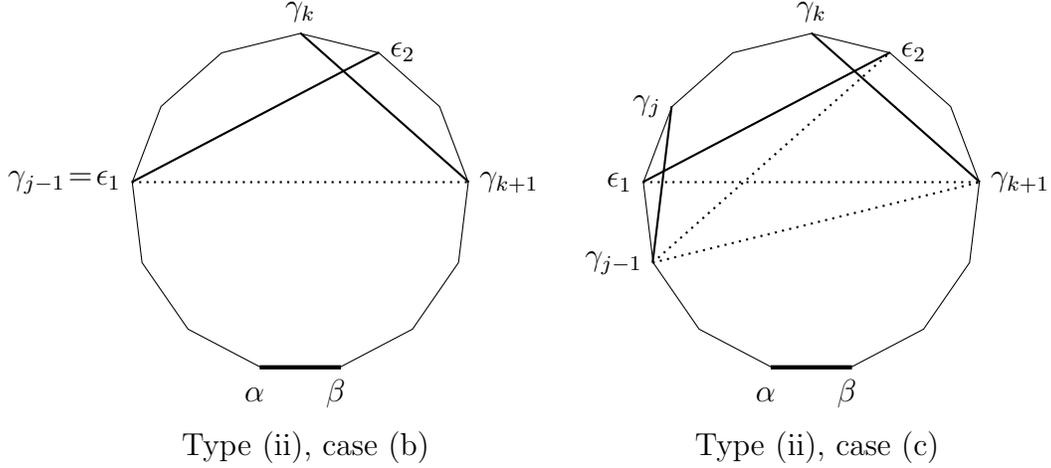
\begin{figure}
    \centering
    \begin{tabular}{cc}
      \begin{tikzpicture}[auto]
        \node[name=s, shape=regular polygon, regular polygon sides=13, minimum size=4.5cm, draw] {}; 
        \draw[shift=(s.corner 7)] node[below] {$\alpha$ \rule{0pt}{1.7ex}};
        \draw[shift=(s.corner 8)] node[below] {$\beta$ \rule{0pt}{1.7ex}};
        \draw[ultra thick] (s.corner 7) to (s.corner 8);
        \draw[shift=(s.corner 4)] node[left] {$\gamma_{j-1}\!=\!\epsilon_1$};
        \draw[shift=(s.corner 1)] node[above] {$\gamma_k$};
        \draw[shift=(s.corner 13)] node[right] {$\epsilon_2$};
        \draw[shift=(s.corner 11)] node[right] {$\gamma_{k+1}$};
        \draw[thick] (s.corner 4) to (s.corner 13);
        \draw[thick] (s.corner 1) to (s.corner 11);
        \draw[thick,dotted] (s.corner 4) to (s.corner 11);
      \end{tikzpicture} 
      &
      \begin{tikzpicture}[auto]
        \node[name=s, shape=regular polygon, regular polygon sides=13, minimum size=4.5cm, draw] {}; 
        \draw[shift=(s.corner 7)] node[below] {$\alpha$ \rule{0pt}{1.7ex}};
        \draw[shift=(s.corner 8)] node[below] {$\beta$ \rule{0pt}{1.7ex}};
        \draw[ultra thick] (s.corner 7) to (s.corner 8);
        \draw[shift=(s.corner 5)] node[left] {$\gamma_{j-1}$};
        \draw[shift=(s.corner 4)] node[left] {$\epsilon_1$};
        \draw[shift=(s.corner 3)] node[left] {$\gamma_j$};
        \draw[shift=(s.corner 1)] node[above] {$\gamma_k$};
        \draw[shift=(s.corner 13)] node[right] {$\epsilon_2$};
        \draw[shift=(s.corner 11)] node[right] {$\gamma_{k+1}$};
        \draw[thick] (s.corner 4) to (s.corner 13);
        \draw[thick] (s.corner 3) to (s.corner 5);
        \draw[thick] (s.corner 1) to (s.corner 11);
        \draw[thick,dotted] (s.corner 5) to (s.corner 11);
        \draw[thick,dotted] (s.corner 5) to (s.corner 13);
        \draw[thick,dotted] (s.corner 4) to (s.corner 11);
      \end{tikzpicture} \\
      \hspace{4ex} Type (ii), case (b) & Type (ii), case (c)
    \end{tabular}
    \caption{In type (ii), the diagonal $\{\epsilon_1, \epsilon_2\}$
    forces the presence of the diagonal $\{\gamma_{j-1},
    \gamma_{k+1}\}$.}
    \label{fig:empty}
  \end{figure}

  Type (iii):  Suppose that crossing diagonals $\fa = \{ \alpha,
  \delta \}$ and $\fb = \{ \beta, \delta' \}$ ending in $\alpha$,
  respectively $\beta$ do exist in $\fA$.  We will show that $\fA$ is
  of type (iii).

  By the Ptolemy condition $\{ \alpha, \delta' \}$ and $\{ \beta,
  \delta \}$ are in $\fA$.  Consider those vertices which are
  connected to each of $\alpha$ and $\beta$ by an edge or a diagonal
  in $\fA$.  Denote them by $\delta_1, \ldots, \delta_m$ in increasing
  order and note that $m \geq 2$ because $\delta$ and $\delta'$ are
  among the $\delta_i$.  For ease of notation write $\delta_0 =
  \alpha$ and $\delta_{m+1} = \beta$.

  Let $0 \leq j < k \leq m+1$.  Then $\{ \delta_j, \delta_k \}$ is in
  $\fA$.  Namely, this holds by definition if $j = 0$ since then
  $\delta_j = \alpha$.  So we can assume $1 \leq j$ and by symmetry $k
  \leq m$.  But then $\{ \alpha, \delta_k \}$ and $\{ \delta_j, \beta
  \}$ are crossing diagonals in $\fA$ and by the Ptolemy condition $\{
  \delta_j, \delta_k \}$ is in $\fA$.  So the $\delta_i$ form the
  vertices of a clique of edges and diagonals in $\fA$ which contains
  the distinguished base edge.

  We show that $\fA$ is of type (iii) by showing that if $\{ \delta_j,
  \delta_{j+1} \}$ is a diagonal with $0 \leq j \leq m$, then no
  diagonal in $\fA$ crosses $\{ \delta_j, \delta_{j+1} \}$: then $\{
  \delta_j, \delta_{j+1} \}$ divides the clique with vertices
  $\delta_i$ from a (smaller) Ptolemy diagram; see
  Figure~\ref{fig:decomposition}.  Note that, again, each smaller
  Ptolemy diagram is uniquely determined.

  So suppose that $\fA$ contains a diagonal $\{ \epsilon_1, \epsilon_2
  \}$ crossing $\{ \delta_j, \delta_{j+1} \}$.  We can assume that
  $\epsilon_1 < \epsilon_2$ and by symmetry considerations that
  $\delta_j < \epsilon_1 < \delta_{j+1}$.  Note that this entails $j
  \leq m-1$.

  There are two cases, each leading to a contradiction.
\begin{enumerate}

  \item[(a)] $\epsilon_2 \neq \beta$, see Figure
  \ref{fig:clique}.  Then the diagonal $\{ \epsilon_1, \epsilon_2 \}$
  crosses the diagonals $\{ \alpha, \delta_{j+1} \}$ and $\{ \beta,
  \delta_{j+1} \}$ so by the Ptolemy condition $\{ \alpha,
  \epsilon_1 \}$ and $ \{ \beta, \epsilon_1 \}$ are in $\fA$.
  Hence $\epsilon_1$ is among the $\delta_i$, contradicting 
  $\delta_j < \epsilon_1 < \delta_{j+1}$.

\smallskip

  \item[(b)] $\epsilon_2 = \beta$, see Figure \ref{fig:clique}.  Then
  $\{ \beta, \epsilon_1 \} = \{ \epsilon_1, \epsilon_2 \}$ is in
  $\fA$.  Moreover, $\{ \epsilon_1, \epsilon_2 \}$ crosses $\{ \alpha,
  \delta_{j+1} \}$ so by the Ptolemy condition $\{ \alpha, \epsilon_1
  \}$ is in $\fA$.  Hence $\epsilon_1$ is again among the $\delta_i$
  which is a contradiction.

\end{enumerate}
  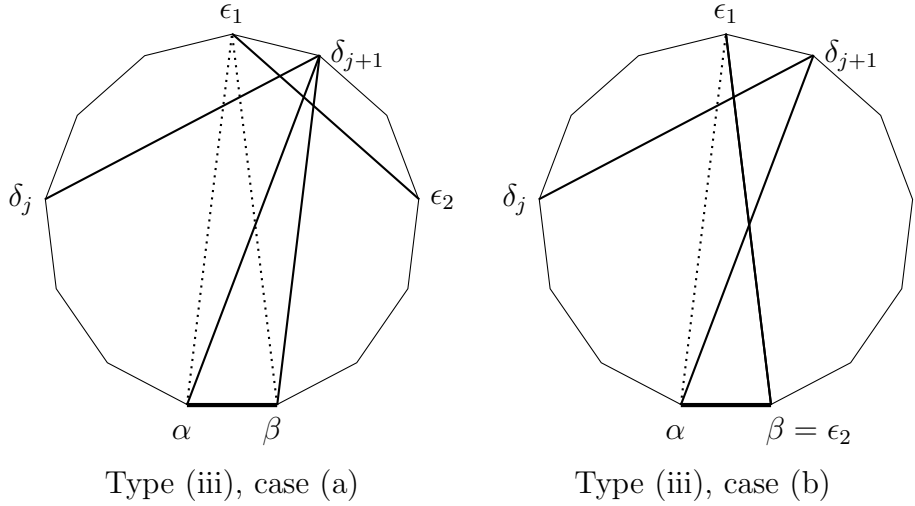
\begin{figure}
    \centering
    \begin{tabular}{cc}
      \begin{tikzpicture}[auto]
        \node[name=s, shape=regular polygon, regular polygon sides=13, minimum size=5cm, draw] {}; 
        \draw[shift=(s.corner 7)] node[below] {$\alpha$ \rule{0pt}{1.7ex}};
        \draw[shift=(s.corner 8)] node[below] {$\beta$ \rule{0pt}{1.7ex}};
        \draw[ultra thick] (s.corner 7) to (s.corner 8);
        \draw[shift=(s.corner 4)] node[left] {$\delta_j$};
        \draw[shift=(s.corner 1)] node[above] {$\epsilon_1$};
        \draw[shift=(s.corner 13)] node[right] {$\delta_{j+1}$};
        \draw[shift=(s.corner 11)] node[right] {$\epsilon_2$};
        \draw[thick] (s.corner 4) to (s.corner 13);
        \draw[thick] (s.corner 7) to (s.corner 13);
        \draw[thick] (s.corner 8) to (s.corner 13);
        \draw[thick] (s.corner 1) to (s.corner 11);
        \draw[thick,dotted] (s.corner 7) to (s.corner 1);
        \draw[thick,dotted] (s.corner 8) to (s.corner 1);
      \end{tikzpicture} 
      &
      \begin{tikzpicture}[auto]
        \node[name=s, shape=regular polygon, regular polygon sides=13, minimum size=5cm, draw] {}; 
        \draw[shift=(s.corner 7)] node[below] {$\alpha$ \rule{0pt}{1.7ex}};
        \draw[shift=(s.corner 8)] node[below] {$\lefteqn{\beta = \epsilon_2}$ \rule{0pt}{1.7ex}};
        \draw[ultra thick] (s.corner 7) to (s.corner 8);
        \draw[shift=(s.corner 4)] node[left] {$\delta_j$};
        \draw[shift=(s.corner 1)] node[above] {$\epsilon_1$};
        \draw[shift=(s.corner 13)] node[right] {$\delta_{j+1}$};
        \draw[thick] (s.corner 4) to (s.corner 13);
        \draw[thick] (s.corner 7) to (s.corner 13);
        \draw[thick] (s.corner 1) to (s.corner 8);
        \draw[thick,dotted] (s.corner 7) to (s.corner 1);
        \draw[thick] (s.corner 8) to (s.corner 1);
      \end{tikzpicture} \\
      Type (iii), case (a) & Type (iii), case (b)
    \end{tabular}
    \caption{In type (iii), the diagonal $\{\epsilon_1, \epsilon_2\}$
    forces the presence of the diagonals $\{\alpha, \epsilon_1\}$ and
    $\{ \beta,\epsilon_1 \}$.}
    \label{fig:clique}
  \end{figure}
\end{proof}

\begin{Remark}
\label{rmk:decomposition}
The proposition proves Theorem A(ii) from the introduction: each
Ptolemy diagram can be uniquely decomposed into regions, each of which
is either an empty cell or a clique.

Moreover, let $\fA$ be a Ptolemy diagram.  To obtain $\nc \fA$ from
$\fA$, one replaces empty cells by cliques and vice versa in the
decomposition.

Namely, let $\fd$ be an arbitrary diagonal.  If $\fd$ separates two
regions of $\fA$, then $\fd$ is one of the diagonals along which two
smaller Ptolemy diagrams have been glued in the decomposition to form
$\fA$, so clearly $\fd$ crosses no diagonal of $\fA$, so $\fd \in \nc
\fA$.  If $\fd$ is an internal diagonal in a clique, then it crosses
some other internal diagonal which must be in $\fA$, so $\fd \not\in
\nc \fA$.  If $\fd$ is an internal diagonal in an empty cell, then it
crosses no diagonal of $\fA$, so $\fd \in \nc
\fA$.

Note that we have $\fA = \nc \fA$ if and only if $\fA$ is a
triangulation of the polygon, since a triangle is the only polygon
which is an empty cell and a clique simultaneously.
\end{Remark}

With the above decomposition, we can show the following alternative
characterization of Ptolemy diagrams.

\begin{Proposition}
\label{pro:ncnc}
We have $\fA = \nc \nc \fA$ if and only if $\fA$ is a Ptolemy
diagram. 
\end{Proposition}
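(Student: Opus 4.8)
The plan is to prove the two implications separately. The key tool, which I establish first, is a geometric observation: if $\fa = \{\alpha_1,\alpha_2\}$ and $\fb = \{\beta_1,\beta_2\}$ are crossing diagonals of $P$ and $\fc$ is a diagonal joining an end point of $\fa$ to an end point of $\fb$, then every diagonal $\fd$ crossing $\fc$ also crosses $\fa$ or $\fb$. To see this, label the vertices so that $\alpha_1,\beta_1,\alpha_2,\beta_2$ occur in this cyclic order around $P$; relabelling the end points of $\fa$ and of $\fb$ (which at worst reverses the orientation of $P$) lets us assume $\fc = \{\alpha_1,\beta_1\}$. Since $\fd$ crosses $\fc$, one end point $\delta_1$ of $\fd$ lies strictly on the arc from $\alpha_1$ to $\beta_1$ avoiding $\alpha_2,\beta_2$, and the other end point $\delta_2$ lies strictly on the complementary arc, so $\delta_2 \notin \{\alpha_1,\beta_1\}$. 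If $\delta_2$ lies on the part of that complementary arc running from $\beta_1$ (exclusive) to $\alpha_2$ (inclusive), then $\delta_1,\beta_1,\delta_2,\beta_2$ occur in this cyclic order, so $\fd$ crosses $\fb$; in the remaining case $\delta_2$ lies strictly between $\alpha_2$ and $\alpha_1$ (past $\beta_2$), so $\alpha_1,\delta_1,\alpha_2,\delta_2$ occur in this cyclic order, so $\fd$ crosses $\fa$. These two cases exhaust the complementary arc.

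Using this, I claim that $\nc\fB$ is a Ptolemy diagram for \emph{every} set of diagonals $\fB$; the implication ``$\fA = \nc\nc\fA \Rightarrow \fA$ is a Ptolemy diagram'' then follows at once by taking $\fB = \nc\fA$. So let $\fa,\fb \in \nc\fB$ be crossing diagonals and let $\fc$ be one of the four connecting diagonals $\{\alpha_i,\beta_j\}$ (a connecting pair which is merely an edge imposes no condition). If $\fc$ crossed some $\fd \in \fB$, the observation would force $\fd$ to cross $\fa$ or $\fb$, contradicting $\fa,\fb \in \nc\fB$. Hence $\fc$ crosses no diagonal of $\fB$, i.e.\ $\fc \in \nc\fB$, so $\nc\fB$ satisfies the Ptolemy condition.

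Conversely, let $\fA$ be a Ptolemy diagram. By Proposition \ref{pro:decomposition} and Remark \ref{rmk:decomposition} it has a unique decomposition into empty cells and cliques glued along its separating diagonals, and $\nc\fA$ arises from this decomposition by interchanging empty cells with cliques while keeping the separating diagonals. The outcome is again a gluing of empty cells and cliques, hence a Ptolemy diagram by Proposition \ref{pro:decomposition}, and by uniqueness it is the decomposition of $\nc\fA$. Applying Remark \ref{rmk:decomposition} a second time, now to $\nc\fA$, we obtain $\nc(\nc\fA)$ by interchanging empty cells with cliques once more, which returns the decomposition of $\fA$; thus $\nc\nc\fA = \fA$.

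The one genuinely non-formal step is the geometric observation in the first paragraph: it is elementary but demands care about which circular arc each of $\alpha_1,\alpha_2,\beta_1,\beta_2,\delta_1,\delta_2$ lies on. Everything afterwards is formal manipulation with $\nc$ and with the decomposition of Proposition \ref{pro:decomposition}. (A direct alternative for the converse: one always has $\fA \subseteq \nc\nc\fA$, and any $\fd \in \nc\nc\fA \setminus \fA$ would, by the decomposition, either cross a separating diagonal of $\fA$ or be an internal diagonal of an empty cell with at least four vertices --- hence be crossed by another diagonal of that cell --- and in both cases $\fd$ crosses a member of $\nc\fA$, a contradiction.)
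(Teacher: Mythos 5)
Your proof is correct and follows essentially the same route as the paper's. For the direction ``$\fA=\nc\nc\fA \Rightarrow$ Ptolemy'' you repackage the paper's argument as the cleaner statement that $\nc\fB$ is a Ptolemy diagram for \emph{any} set of diagonals $\fB$; the underlying geometric fact --- that a diagonal crossing a connecting diagonal $\{\alpha_i,\beta_j\}$ must cross $\fa$ or $\fb$ --- is exactly the one the paper invokes (``each such diagonal intersects $\fa$ or $\fb$''), only you verify it in full detail. For the converse you apply Proposition~\ref{pro:decomposition} and Remark~\ref{rmk:decomposition} in the same way as the paper, noting that $\nc$ swaps empty cells and cliques so that applying it twice is the identity; your parenthetical ``direct alternative'' is a sound variant of the same decomposition argument.
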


\begin{proof}
Suppose that $\fA = \nc \nc \fA$.  In Figure \ref{fig:Ptolemy},
consider the diagonal $\{ \alpha_1,\beta_1 \}$.  The diagonals
crossing it are precisely the diagonals which connect a vertex on one
side of $\{ \alpha_1,\beta_1 \}$ with a vertex on the other side of
$\{ \alpha_1,\beta_1 \}$.  But each such diagonal intersects $\fa$ or
$\fb$ so is outside $\nc \fA$.  Hence $\{ \alpha_1,\beta_1 \}$ is in
$\nc \nc \fA = \fA$.  The other diagonals in the Ptolemy condition
follow similarly.

Conversely, suppose that $\fA$ satisfies the Ptolemy condition.  By
Remark \ref{rmk:decomposition}, the operator $\nc$ interchanges empty
cells and cliques in the decomposition of $\fA$ according to
Proposition \ref{pro:decomposition}, so it is clear that $\fA = \nc
\nc \fA$.
\end{proof}

\begin{Remark}
  Combining Remark \ref{rmk:IY} and Proposition \ref{pro:ncnc} proves
  Theorem A(i) of the introduction.  In particular, to count torsion
  pairs in the cluster category of type $A_n$ we only need to
  determine the number of Ptolemy diagrams of the $(n+3)$-gon with a
  distinguished base edge.
\end{Remark}

\section{Counting the number of Ptolemy diagrams}

In this section we deduce expressions for the number of Ptolemy
diagrams.  First we compute the number of Ptolemy diagrams with
a distinguished base edge.  In a second step, we also determine the
number of Ptolemy diagrams up to rotation.

\subsection{Ptolemy diagrams with a distinguished base edge}
\label{sec:diagrams-base-edge}

Using combinatorial reasoning we shall obtain below an equation for
the (ordinary) generating function
\begin{equation}
  \label{eq:Ptolemy-generating-function}
  \P(y)=\sum_{N\geq1} \#\{\text{Ptolemy diagrams of the $(N+1)$-gon}\}
  y^N.  
\end{equation}
Let us briefly recall some facts from the general theory of
generating functions, see for example the book by Bergeron, Labelle
and Leroux~\cite[sec.\ 1.3]{BLL} or Aigner \cite[secs.\ 3.2 and
3.3]{A}.  Of course, our objective is to convey the general idea,
precise formulations are given in the cited textbooks.

Let $\F$ and $\G$ be sets of objects.  Each object is assigned to a
non-negative integer, referred to as its size.  Let $\F(y)$ and
$\G(y)$ be their generating functions.  Then the generating function
\begin{itemize}
\item for the disjoint union of $\F$ and $\G$ is $\F(y)+\G(y)$, and
\item for the set of objects obtained by pairing objects from $\F$
  and $\G$ is $\F(y)\G(y)$, where the size of a pair is the sum of
  the sizes of its two components.
\end{itemize}
Because of the natural correspondence with the operation on
generating functions, we denote the pairing of sets considered in the
second item by $\F\cdot\G$.

We can now derive an equation for the generating function of lists of
Ptolemy diagrams $\mathcal L^{\mathcal P}$.  Namely, either such a
list is empty, or it is a pair whose first component is a Ptolemy
diagram and whose second component is a list of Ptolemy diagrams.  We
thus have
$$
\mathcal L^{\P} = \emptyset\quad {\mathaccent\cdot\cup}\quad \P \cdot
\mathcal L^{\P},
$$
or, on the level of generating functions
$$
\mathcal L^{\P}(y) = 1 + \P(y) \mathcal L^{\P}(y),
$$
which entails 
$$
\mathcal L^{\P}(y)=\frac{1}{1-\P(y)}.
$$

Clearly, we can interpret the set of Ptolemy diagrams of type (ii) in
Proposition \ref{pro:decomposition} as the set of lists of Ptolemy
diagrams with at least two elements.  With a slight shift of
perspective, this is the same as a triple, whose first two components
are Ptolemy diagrams, and whose last component is a list of diagrams.
Hence, this set has generating function $\P(y)^2/\big(1-\P(y)\big)$.
Similarly, a Ptolemy diagram of type (iii) in Proposition
\ref{pro:decomposition} can be interpreted as a list of diagrams with
at least three elements.  Namely, recall that in the decomposition of
Proposition~\ref{pro:decomposition}, the cliques which occur have at
least four edges, one of which is the distinguished base edge; to the
other three we can attach Ptolemy diagrams.

In summary, using the combinatorial decomposition of Proposition
\ref{pro:decomposition} sketched in Figure~\ref{fig:decomposition},
\begin{equation*}
  \P(y) = y + \frac{\P(y)^2}{1-\P(y)} +
  \frac{\P(y)^3}{1-\P(y)}.
\end{equation*}

Let us rewrite this equation (essentially multiplying by $1-\P(y)$),
to make it amenable to Lagrange inversion
(eg.~\cite[sec.\ 3.1]{BLL} or \cite[thm.\ 3.8]{A}):
\begin{equation*}
  \P(y)=y\frac{1-\P(y)}
  {1- 2 \P(y) - \P(y)^2},
\end{equation*}
i.e., $\P(y)=yA(\P(y))$ with $A(y)=(1-y)/(1-2y-y^2)$.
Thus, denoting the coefficient of $y^N$ in $\P(y)$ with
$[y^N] \P(y)$, we have
\begin{align*}
  [y^N] \P(y)=\frac{1}{N}[y^{N-1}] \left(\frac{1-y}{1-2y-y^2}\right)^N.
\end{align*}
We can now apply the binomial theorem $(1+z)^a=\sum_{k\geq0}
\binom{a}{k} z^k$, for $a\in \mathbb Z$ and
$\binom{a}{k}=a(a-1)\cdots(a-k+1)/k!$, to transform the right hand
side into a sum.  As pointed out by Christian Krattenthaler the
result becomes much nicer if we first rewrite the expression
slightly, taking advantage of the fact that $1-2y-y^2$ is \lq
almost\rq\ $(1-y)^2$:
\begin{align}
  \nonumber
  (1-y)^N(1-2y-y^2)^{-N}
  &=(1-y)^{-N}\left(1-\frac{2y^2}{(1-y)^2}\right)^{-N}\\
  \nonumber
  &=(1-y)^{-N}\sum_{\ell\geq 0}%
  \binom{-N}{\ell}(-1)^\ell\frac{(2y^2)^\ell}{(1-y)^{2\ell}}\\
  \nonumber
  &=%
  \sum_{\ell\geq 0}\binom{-N}{\ell}(-1)^\ell (2y^2)^\ell%
  \sum_{k\geq 0}\binom{-N-2\ell}{k}(-1)^k y^k\\
  \label{equ:Martin}
  &=\sum_{k,\ell\geq 0}%
  \binom{-N}{\ell}\binom{-N-2\ell}{k}(-1)^{k+\ell}2^\ell y^{k+2\ell}.
\end{align}
Extracting the coefficient of $y^{N-1}$ in
Equation~\eqref{equ:Martin} by setting $k=N-1-2\ell$ we obtain
\[%
[y^N] \P(y) =\frac{1}{N}\sum_{\ell\geq 0}%
\binom{-N}{\ell}\binom{-N-2\ell}{N-1-2\ell}(-1)^{N-1-\ell}2^{\ell}.
\]
Finally, using
$\binom{-N}{\ell}=(-1)^\ell\binom{N+\ell-1}{\ell}$, we get that
the number of Ptolemy diagrams of the $(N+1)$-gon with
a distinguished base edge is
$$
\frac{1}{N}\sum_{\ell\geq 0}%
2^\ell\binom{N-1+\ell}{\ell}\binom{2N-2}{N-1-2\ell}.
$$
Setting $N = n+2$ proves Theorem B of the introduction, and the first
few values are given there.
\begin{Remark}
  Note that Petkov{\v{s}}ek's algorithm {\bf hyper}~\cite[sec.\
  8]{AeqB} \emph{proves} that the sum above cannot be written as a
  linear combination of (a fixed number of) hypergeometric terms.
\end{Remark}
\begin{Remark}
\label{rmk:asymptotics}
  Since the generating function $\P(y)$ satisfies an algebraic
  equation, the asymptotic behaviour of the coefficients of $\P(y)$
  can be extracted automatically, for example using the {\tt
    equivalent} function in Bruno Salvy's package {\bf gdev}
  available at \url{http://algo.inria.fr/libraries/}.  Thus, we learn
  that the leading term of the asymptotic expansion of $[y^N] \P(y)$
  is
  $$
  \frac{\alpha}{\sqrt{\pi N^3}} \rho^N,
  $$
  where $\rho=6.847333996370022\dots$ is the largest positive root of
  $8x^3 - 48x^2 - 47x + 4$ and $\alpha=0.10070579427884086\dots$ is
  the smallest positive root of $1136x^6 - 71x^4 - 98x^2 + 1$.
\end{Remark}

\subsection{Ptolemy diagrams up to rotation}
\label{sec:diagrams-isotypes}

Let us now turn to the enumeration of Ptolemy diagrams up to
rotation.  It seems easiest to apply a relatively general technique
known as the \lq dissymmetry theorem for trees\rq.  Namely, we will
consider Ptolemy diagrams as certain planar trees, where each inner
vertex of the tree corresponds to either an empty cell or a clique of
the diagram.  Thus, we will have to count trees according to their
number of leaves, where the edges incident to an inner vertex are
cyclically ordered and additionally these inner vertices \lq know\rq\
whether they correspond to an empty cell or such a clique.  This
situation is covered by Proposition~\ref{pro:dissymmetry} below.

This proposition is phrased in the language of combinatorial species
(as described in~\cite{BLL}), which is at first a tool to compute with
\emph{labelled} objects.  Formally, a species is a \emph{functor} from
the category of finite sets with bijections into itself.  Thus,
applying a species $\F$ to a finite set $U$ -- namely, a set of
labels, we obtain a new set $\F[U]$ -- namely the set of objects that
can be produced using the given labels.  Applying $\F$ to a bijection
$\sigma: U\to V$ produces a bijection $\F[\sigma]: \F[U]\to \F[V]$,
which, by functoriality, corresponds to relabelling the objects.
(However, when defining a particular species here, we refrain from
giving a precise definition of this relabelling operation.)

A simple but nevertheless important species is the \Dfn{singleton
  species $Y$}: it returns the input set $U$ if $U$ has cardinality
one and otherwise the empty set.  Another basic species we will need
is the \Dfn{species of unordered pairs $E_2$}, which returns the
input set $U$ if $U$ has cardinality two and the empty set otherwise.
Finally, for $k\geq 1$ we introduce the \Dfn{species of cycles
  $C_k$}, which consists of all (oriented) cycles with $k$ labelled
vertices.

We associate to every species $\F$ a so called \Dfn{exponential
  generating function} $\F(y)$, which is given by
$$
\F(y)=\sum_{N\geq1} \# \F[\{1,2,\dots,N\}] \frac{y^N}{N!},
$$
i.e., the coefficient of $y^N$ is the number of objects with labels
$\{1,2,\dots,N\}$ produced by $\F$, divided by $N!$.  In particular,
the exponential generating function associated to $Y$ is $Y(y)=y$,
and the exponential generating function associated to $E_2$ is
$E_2(y)=y^2/2$.  Finally, $C_k(y)=(k-1)!\frac{y^k}{k!} =
\frac{y^k}{k}$.

There are natural definitions for the sum $\F+\G$, the product
$\F\cdot\G$ and the composition $\F\circ\G$ of two species $\F$ and
$\G$.  We only give informal descriptions of the sets of objects which
they produce, and refer for precise definitions to~\cite[sec.\
1]{BLL}.  Let $U$ be a set of labels, then
\begin{itemize}
\item the set of objects in $(\F+\G)[U]$ is the disjoint union of
  $\F[U]$ and $\G[U]$,
\item the set of objects in $(\F\cdot\G)[U]$ is obtained by
  partitioning the set $U$ in all possible ways into two disjoint
  (possibly empty) sets $V$ and $W$ such that $U=V\cup W$, and
  producing all pairs of objects in 
  $$
  (\F[V],\G[W]),
  $$
  i.e., $\{(f,g)\,\mid\, f\in\F[V],g\in\G[W]\}$,
\item the set of objects in $(\F\circ\G)[U]$ is the set of all tuples
  of the form
  $$
  \big(\F[\{1,2,\dots,k\}], \G[B_1], \G[B_2],\dots, \G[B_k]\big),
  $$
where $\{B_1, B_2,\dots,B_k\}$ is a set partition of $U$.
\end{itemize}
The composition of species can be visualised by taking an object
produced by $\F$, and replacing all its labels by objects produced by
$\G$, such that the set of labels is exactly $U$.  In particular,
$\F\circ Y=Y\circ\F=\F$.

Finally, we need to describe the derivative $\F'$ of a species $\F$.
Given a set of labels $U$, we set
$\F'[U]=\F[U\,{\mathaccent\cdot\cup}\,\{*\}]$, where $*$ is a \lq
transcendental\rq\ element, i.e., an element that does not appear in
$U$.

It should not come as a surprise (although it certainly needs a proof)
that the exponential generating functions associated to the sum, the
product, the composition, and the derivative of species are
respectively $\F(y)+\G(y)$, $\F(y)\cdot\G(y)$, $\F\big(\G(y)\big)$ and
$\F'(y)$.

It remains to introduce the species of $R$-enriched trees $b_R$ and
$R'$-enriched rooted trees $B_{R'}$ with labels on the leaves, see
\cite[def.\ 13, sec.\ 3.1 and pg.\ 287, sec.\ 4.1]{BLL}: let $R$ be a
species with $\# R[\emptyset] = 0$, $\# R[\{1\}]=1$ and $\#
R[\{1,2\}]=0$.  Then an $R$-enriched tree on a set of labels $U$ is a
tree with at least two vertices, whose vertices of degree one (i.e.,
the leaves) correspond to the labels in $U$.  Additionally, every
vertex is assigned an object from $R[N]$, where $N$ is the set of
neighbours of the vertex.  Since $\# R[\{1,2\}]=0$ there are no
vertices of degree two.  Therefore, any such tree must have more
leaves than inner vertices and thus the set of $R$-enriched trees
with a finite number of leaves is finite.  The condition $\#
R[\{1\}]=1$ implies that only the inner vertices carry additional
structure.

An $R'$-enriched rooted tree on a set of labels $U$ is a rooted tree,
possibly an isolated vertex, where the vertices of degree at most one
(i.e., the leaves) correspond to the labels in $U$.  Additionally,
every vertex is assigned an object from $R'[N]$, where $N$ is the set
of those neighbours of the vertex which are further away from the
root than the vertex itself.  Again, since $\# R'[\{1\}]=0$, no
vertex can have a single successor and thus the set of $R'$-enriched
rooted trees with a finite number of leaves is finite.

\begin{figure}
  \centering
  \begin{tikzpicture}[auto]


    \node[name=s, shape=regular polygon, regular polygon sides=8, minimum size=6cm, draw] {}; 

    \draw[shift=(s.corner 1)] node[above=5pt] {3};
    \draw[shift=(s.corner 2)] node[above=5pt] {2};
    \draw[shift=(s.corner 3)] node[left=5pt] {6};
    \draw[shift=(s.corner 4)] node[left=5pt] {5};
    \draw[shift=(s.corner 5)] node[below=7pt] {*};
    \draw[shift=(s.corner 5)] node[inner sep=-1pt,below=3pt] (x) {};
    \draw[shift=(s.corner 5)] node[inner sep=-1pt,below=1.5pt] (z) {};
    \draw[shift=(s.corner 6)] node[below=7pt] {7};
    \draw[shift=(s.corner 6)] node[inner sep=-1pt,below=3pt] (y) {};
    \draw[shift=(s.corner 7)] node[right=5pt] {1};
    \draw[shift=(s.corner 8)] node[right=5pt] {4};

%
%
    \draw[ultra thick,black] (s.corner 5) to (s.corner 6);

    \draw[thick] (s.corner 1) to (s.corner 7);
    \draw[thick] (s.corner 2) to (s.corner 8);
    \draw[thick] (s.corner 2) to (s.corner 7);
    \draw[thick] (s.corner 2) to (s.corner 6);
    \draw[thick] (s.corner 3) to (s.corner 6);

    \draw (3.2,0) node[inner sep=-1pt,dotted] (et) {$\circ$};
    \draw (3.6,0) node[dotted] {1};
    \draw (-2.2,2.2) node[inner sep=-1pt,dotted] (to) {$\circ$};
    \draw (-2.5,2.5) node[dotted] {2};
    \draw (0,3.2) node[inner sep=-1pt,dotted] (tre) {$\circ$};
    \draw (0,3.6) node[dotted] {3};
    \draw (2.2,2.2) node[inner sep=-1pt,dotted] (fire) {$\circ$};
    \draw (2.5,2.5) node[dotted] {4};
    \draw (-2.2,-2.2) node[inner sep=-1pt,dotted] (fem) {$\circ$};
    \draw (-2.5,-2.5) node[dotted] {5};
    \draw (-3.2,0) node[inner sep=-1pt,dotted] (seks) {$\circ$};
    \draw (-3.6,0) node[dotted] {6};
    \draw (2.2,-2.2) node[inner sep=-1pt,dotted] (syv) {$\circ$};
    \draw (2.5,-2.5) node[dotted] {7};
    \draw (-1.5,1.2) node[inner sep=-1pt,dotted] (a) {$\circ$};
    \draw (1.2,1.2) node[inner sep=-1pt,dotted] (b) {$\circ$};
    \draw (-1.5,-1.5) node[inner sep=-1pt,dotted] (c) {$\circ$};
    \draw (-1.2,-1.6) node[dotted] {*};
    \draw (1.2,-1.5) node[inner sep=-1pt,dotted] (d) {$\circ$};

    \draw[dotted] (et) to (b);
    \draw[dotted] (tre) to (b);
    \draw[dotted] (fire) to (b);
    \draw[dotted] (d) to (b);
    \draw[dotted] (to) to (a);
    \draw[dotted] (a) to (d);
    \draw[dotted] (d) to (syv);
    \draw[dotted] (fem) to (c);
    \draw[dotted] (seks) to (c);
    \draw[dotted] (a) to (c);

  \end{tikzpicture} 
  \caption{The correspondence between $R'$-enriched rooted trees and labelled
    Ptolemy diagrams with base edge.}
\label{fig:correspondence}
\end{figure}
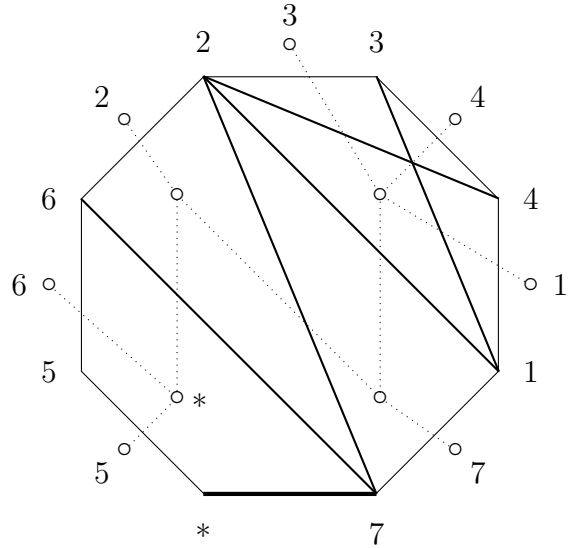

In our situation, we set $R=Y+C_{\geq 3}+C_{\geq 4}$ where $C_{\geq
  k}$ denotes the species of cycles with at least $k$ vertices.  The
derivative of $R$ is
\[
R'=1+\mathcal L_{\geq 2}+\mathcal L_{\geq 3},
\]
where $\mathcal L_{\geq k}$ denotes the species of lists with at
least $k$ elements.  We now see that $B_{R'}$ is isomorphic (in the
sense of \cite[def.\ 12, sec.\ 1.2]{BLL}) to the combinatorial
species of Ptolemy diagrams with a distinguished base edge and labels
on all vertices except the counterclockwise first on the base edge.
Namely, a Ptolemy diagram can be regarded as an $R'$-enriched rooted
tree as follows: the region attached to the distinguished base edge
corresponds to the root and the other regions to the internal
vertices of the tree, i.e., vertices which are not leaves, see
Figure~\ref{fig:correspondence}.  Note that the degenerate Ptolemy
diagram, consisting of the base edge only, carries one label.  This
corresponds to the tree consisting of one isolated vertex, which is
also labelled -- despite being the root of the tree.

Let us informally explain the meaning of the three summands in $R'$:
the first summand, $1$, applies if a vertex is a leaf and thus has no
successor.  The second summand, $\mathcal L_{\geq 2}$, applies if a
vertex corresponds to a region that is of type (ii) in the
decomposition of Proposition \ref{pro:decomposition}, i.e., an empty
cell, in which case the vertex must have at least two successors.
Finally, the third summand, $\mathcal L_{\geq 3}$, applies if a
vertex corresponds to a region that is of type (iii) in Proposition
\ref{pro:decomposition}, in which case the vertex must have at least
three successors.  In the latter two cases the species of lists
imposes an ordering onto the successors of the vertex.

In a similar manner we can see that $b_R$ is the species of Ptolemy
diagrams up to rotation and labels on \emph{all} vertices.  Here,
enriching the inner vertices with the species of cycles imposes a
cyclic ordering on the neighbours of each vertex.

We can now state the announced tool.  We reproduce it here in a
slightly simplified form; it is the special case of Theorem 4.1.7 in
\cite{BLL} obtained by setting $X=1$.  In this special case we
additionally have to require $\# R_0[\{1,2\}]=0$ to ensure
well-definedness of the species involved.
\begin{Proposition}\label{pro:dissymmetry}
  Let $R_0$ be a combinatorial species such that $\#
  R_0[\emptyset]=\# R_0[\{1\}]=\# R_0[\{1,2\}]=0$ and let $R=R_0+Y$.
  Then the combinatorial species $b_R$ of $R$-enriched trees and the
  combinatorial species of $R'$-enriched rooted trees $B_{R'}$ are
  related as follows:
  \begin{equation*}
    b_R + B_{R'}^2 = (E_2 + R_0)\circ B_{R'} + Y\cdot B_{R'}.
  \end{equation*}
\end{Proposition}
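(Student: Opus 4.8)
The plan is to derive the identity as the $X=1$ specialization of the dissymmetry theorem for $R$-enriched trees, Theorem 4.1.7 in \cite{BLL}, but to make the argument self-contained I would give a direct bijective proof on the level of species, which in this special case is quite transparent. The general dissymmetry principle for trees rests on the observation that a tree can be ``pointed'' either at a vertex, at an edge, or at an edge together with an orientation, and that these pointings are related by an inclusion-exclusion that telescopes. First I would recall the structural description of an $R$-enriched tree: choosing an edge of such a tree and cutting it produces an ordered pair of $R'$-enriched rooted trees (the two halves, each rooted at the endpoint of the cut edge), and this is precisely the content of the classical identity $b_R' = R'\circ B_{R'}$ together with $B_{R'} = Y + R'\circ B_{R'}$, i.e.\ $B_{R'} = Y + b_R'$. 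These are the two basic functional equations I would state at the outset, citing \cite[sec.\ 3.1 and sec.\ 4.1]{BLL}; here $R' = 1 + (R_0)'$ since $R = R_0 + Y$ and $Y' = 1$.

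The heart of the argument is then an inclusion--exclusion over the ways of distinguishing structure in an $R$-enriched tree $\tau$. I would count, in the species-theoretic sense, pairs $(\tau, v)$ with $v$ a vertex of $\tau$, pairs $(\tau, e)$ with $e$ an edge, and pairs $(\tau, \vec e)$ with $\vec e$ an oriented edge. A vertex-pointed $R$-enriched tree is the same as an $R$-enriched rooted tree where the root may have any degree $\geq 1$; unwinding the enrichment $R = R_0 + Y$ at the root, and using that a rooted tree rooted at a leaf contributes $Y$ while a rooted tree rooted at an internal vertex of degree $k$ contributes an element of $R_0$ applied to its $k$ successor-subtrees, one sees that the species of vertex-pointed $R$-enriched trees is $(R_0 \circ B_{R'}) + Y\cdot B_{R'} + (\text{leaf case}) $. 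I would need to bookkeep carefully: the term $Y \cdot B_{R'}$ records the choice of a leaf together with the rest of the tree hanging off it (the unique edge at the leaf), and pointing at an internal vertex of degree $k$ gives $R_0$-structure on the $k$ rooted subtrees, i.e.\ $R_0 \circ B_{R'}$. An oriented-edge-pointed tree is an ordered pair $(B_{R'}, B_{R'})$, i.e.\ $B_{R'}^2$, since cutting along the oriented edge and remembering which half is ``first'' gives exactly an ordered pair of rooted trees. An (unoriented) edge-pointed tree is an \emph{unordered} pair of rooted trees, i.e.\ $(E_2 \circ B_{R'})$-structure — but one must be careful that the two rooted trees are genuinely distinct as rooted trees (they have disjoint nonempty label sets coming from the leaves, except in the degenerate single-edge case), so $E_2 \circ B_{R'}$ is the correct species. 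The dissymmetry relation (every tree has exactly one more vertex than edge, counted with the pointings) then reads
\begin{equation*}
  (\text{vertex-pointed}) \;=\; (\text{edge-pointed}) \;+\; b_R,
\end{equation*}
which after substituting the four identifications above and the relation $B_{R'} = Y + R'\circ B_{R'}$ rearranges into exactly
\begin{equation*}
  b_R + B_{R'}^2 = (E_2 + R_0)\circ B_{R'} + Y\cdot B_{R'}.
\end{equation*}
Here one uses that oriented-edge-pointed trees double-count edge-pointed trees in a way matched by $B_{R'}^2$ versus $E_2 \circ B_{R'}$, and that the vertex-pointed count naturally splits off the $Y\cdot B_{R'}$ leaf term; the bookkeeping of the $1$ summand in $R' = 1 + (R_0)'$ against the leaf contributions is what makes everything close up.

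The main obstacle I anticipate is precisely this last bookkeeping step: getting the leaf contributions, the $1$ in $R'$, and the isolated-vertex (degenerate) case to reconcile correctly across the three pointings, so that the inclusion--exclusion produces $Y \cdot B_{R'}$ on the nose rather than $Y\cdot B_{R'}$ up to a correction term. A clean way to sidestep the hazards is simply to invoke \cite[Thm.\ 4.1.7]{BLL} directly: that theorem states $\mathbf{a}_R + X \cdot \mathbf{a}_R^{\bullet 2} = \ldots$ (the general $X$-weighted form), and setting $X = 1$, noting that the hypothesis $\# R_0[\{1,2\}] = 0$ is exactly what is needed to make $b_R$, $B_{R'}$ well-defined species with finitely many structures on each finite label set, yields the displayed identity verbatim. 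I would present the direct bijective sketch as the conceptual content and then remark that the fully rigorous statement is the cited special case of \cite[Thm.\ 4.1.7]{BLL}, so that no delicate species computation needs to be reproduced in full here.
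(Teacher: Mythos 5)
Your ultimate approach coincides with the paper's: the Proposition is stated with no proof, only the remark that it is Theorem 4.1.7 of \cite{BLL} specialized at $X=1$, with the extra hypothesis $\#R_0[\{1,2\}]=0$ imposed to keep the species finite, and you defer to exactly this citation at the end of your sketch. The supplementary bijective outline is in the right spirit, and your four identifications (vertex-pointing $= R_0\circ B_{R'} + Y\cdot B_{R'}$, edge-pointing $= E_2\circ B_{R'}$, oriented-edge-pointing $= B_{R'}^2$) are the right ones; however, the intermediate ``dissymmetry relation'' you display is garbled --- it should read
\[
(\text{vertex-pointed}) + (\text{edge-pointed}) = b_R + (\text{oriented-edge-pointed}),
\]
not ``vertex-pointed $=$ edge-pointed $+\, b_R$'' as written (which omits the oriented-edge term and would not rearrange to the claimed identity). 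With that corrected, the substitution gives the statement directly, and the stray ``$+\,(\text{leaf case})$'' in your vertex-pointed decomposition should be deleted since the $Y\cdot B_{R'}$ summand already accounts for leaves.
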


As far as the enumeration of \emph{labelled} structures is concerned
this proposition is not very interesting.  Namely, it follows
directly from the definition of the derivative of a species that
$B_{R'}$ is the derivative of $b_R$: the correspondence is
accomplished by making the root into another labelled vertex.  In
particular, the number of labelled Ptolemy diagrams up to rotation
with $N+1$ vertices (and $N+1$ labels) equals the number of labelled
Ptolemy diagrams with distinguished base edge with $N+1$ vertices
(and $N$ labels) and is given by $N!$ times the $N$-th coefficient of
$\P(y)$.

However, the proposition enables us to determine also the (ordinary)
generating function of unlabelled Ptolemy diagrams up to rotation.
In the jargon of combinatorial species this is the \Dfn{isomorphism
  type generating function} $\widetilde b_R(y)$ of the species $b_R$
with the specific value of $R$ used above.  In general, the
\Dfn{isomorphism type generating function} of a species $\F$ is
denoted $\widetilde \F(y)$ and we have the usual rules
$\widetilde{(\F+\G)}(y)=\widetilde \F(y)+\widetilde \G(y)$ and
$\widetilde{(\F\G)}(y)=\widetilde \F(y)\widetilde \G(y)$.  To compute
$\widetilde b_R(y)$ we additionally need to use cycle indicator
series.  We collect the facts significant for us in the following
lemma.
\begin{Lemma}
  Let $\F$ be a combinatorial species and $Z_\F$ its cycle indicator
  series.  Then the generating function for the isomorphism types of
  $\F$ is given by
  $$
  \widetilde \F=Z_\F(y, y^2, y^3, \dots) \quad\text{(see \cite[thm.\ 8,
    sec.\ 1.2]{BLL})}.
  $$
  Moreover, let $\G$ be another species, satisfying $\#
  \G[\emptyset]=0$.  Then the generating function for the isomorphism
  types of $\F\circ \G$ is given by
  $$
  \widetilde{\F\circ \G}=Z_\F\big(\widetilde \G(y),\widetilde
  \G(y^2),\widetilde \G(y^3),\dots\big) \quad\text{(see \cite[thm.\ 2,
    sec.\ 1.4]{BLL})}.
  $$
  The cycle indicator series of the species of cycles $C$ is given by
  $$
  Z_C(p_1, p_2, \dots) = \sum_{d\geq 1}
  \frac{\phi(d)}{d}\log\left(\frac{1}{1-p_d}\right),
  $$
  where $\phi$ is Euler's totient (see \cite[eq.\ (18), sec.\
  1.4]{BLL}). 

  The cycle indicator series of the two element set $E_2$ (which
  coincides with the $2$-cycle $C_2$) is given by
  $$
  Z_{E_2}(p_1, p_2, \dots) = \frac{1}{2}(p_1^2 + p_2)
  \quad\text{(see \cite[table 5, app.\ 2]{BLL})}.
  $$
  The cycle indicator series of the $3$-cycle $C_3$ is given by
  $$
  Z_{C_3}(p_1, p_2, \dots) = \frac{1}{3}(p_1^3 + 2p_3)
  \quad\text{(see \cite[table 5, app.\ 2]{BLL})}.
  $$
\end{Lemma}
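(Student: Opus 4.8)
The plan is to assemble the Lemma from a small number of standard facts about combinatorial species, most of which I would quote directly from \cite{BLL}. The two displayed substitution rules are taken verbatim from the textbook: the passage from a cycle indicator series to an isomorphism type generating function, $\widetilde\F(y)=Z_\F(y,y^2,y^3,\dots)$, is \cite[thm.\ 8, sec.\ 1.2]{BLL}, and the plethystic composition rule $\widetilde{\F\circ\G}(y)=Z_\F\big(\widetilde\G(y),\widetilde\G(y^2),\dots\big)$, valid because $\#\G[\emptyset]=0$, is \cite[thm.\ 2, sec.\ 1.4]{BLL}. Both ultimately rest on Burnside's lemma: the number of isomorphism classes of $\F$-structures on an $N$-element set is the average over $\sigma\in\fS_N$ of the number of $\F[\sigma]$-fixed structures, weighted by the monomial $\prod_i p_i^{c_i(\sigma)}$ and then specialised at $p_i=y^i$.

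For the cycle indicator series of the species $C=\sum_{k\ge1}C_k$ of cycles I would argue in two steps. First, for a single $C_k$ one has the classical necklace cycle index
\[
  Z_{C_k}(p_1,p_2,\dots)=\frac{1}{k}\sum_{d\mid k}\phi(d)\,p_d^{k/d},
\]
which I would derive by double counting (or cite as \cite[eq.\ (18), sec.\ 1.4]{BLL}): for a fixed oriented $k$-cycle $c$ the permutations fixing $c$ are precisely its $k$ rotations; a rotation of order $d$ consists of $k/d$ cycles of length $d$ and so contributes the monomial $p_d^{k/d}$; the rotation group, being cyclic of order $k$, contains exactly $\phi(d)$ elements of order $d$ for each $d\mid k$; and there are $(k-1)!$ choices of $c$, so summing the monomials over all (permutation, fixed cycle) pairs and dividing by $k!$ yields the stated formula. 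Summing over $k$ and reindexing by $k=dm$ with $d,m\ge1$ then gives
\begin{align*}
  Z_C &= \sum_{k\ge1}Z_{C_k}
      = \sum_{d\ge1}\sum_{m\ge1}\frac{\phi(d)}{dm}\,p_d^{m}\\
      &= \sum_{d\ge1}\frac{\phi(d)}{d}\sum_{m\ge1}\frac{p_d^{m}}{m}
      = \sum_{d\ge1}\frac{\phi(d)}{d}\log\!\left(\frac{1}{1-p_d}\right),
\end{align*}
using $\sum_{m\ge1}t^m/m=\log\bigl(1/(1-t)\bigr)$.

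Finally, the cycle indices of $E_2$ and $C_3$ are one-line computations, which I would present as such (or simply cite \cite[table 5, app.\ 2]{BLL}): averaging $\prod_i p_i^{c_i(\sigma)}$ over $\fS_2$ gives $\tfrac12(p_1^2+p_2)$, and the $k=3$ instance of the necklace formula above gives $\tfrac13\bigl(\phi(1)p_1^3+\phi(3)p_3\bigr)=\tfrac13(p_1^3+2p_3)$. I do not anticipate any genuine obstacle here: the Lemma is a compilation, and the only point that is not pure citation or bookkeeping is the count of the rotations of a $k$-cycle sorted by their order -- the classical P\'olya enumeration of necklaces -- which occupies only a couple of lines.
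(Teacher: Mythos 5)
Your proposal is correct and matches the paper's treatment: the paper states this Lemma without a proof, relying entirely on the citations to \cite{BLL} that you reproduce (thm.\ 8 of sec.\ 1.2 for $\widetilde\F=Z_\F(y,y^2,\dots)$, thm.\ 2 of sec.\ 1.4 for the plethystic substitution, eq.\ (18) of sec.\ 1.4 for $Z_C$, and table 5 for $Z_{E_2}$, $Z_{C_3}$). You go slightly further by actually deriving the necklace cycle index $Z_{C_k}=\frac1k\sum_{d\mid k}\phi(d)p_d^{k/d}$ via Burnside and then summing over $k$ with the substitution $k=dm$ to obtain the logarithm, and by reading off $Z_{E_2}$ and $Z_{C_3}$ directly; all of these computations are correct, but the paper does not include them, treating them as textbook material.
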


Note that, since $\P(y)$ is algebraic, $\widetilde\P(y)=\P(y)$.
Putting all the bits together, we find:
\begin{Proposition}
\label{pro:Martin}
  The generating function for Ptolemy diagrams up to rotation is
  \begin{multline*}
    2\sum_{d\geq 1}\frac{\phi(d)}{d} \log\left(\frac{1}{1-\P(y^d)}\right)\\
    - \frac{1}{2}\big(3\P(y)^2+\P(y^2)\big)%
    - \frac{1}{3}\big(\P(y)^3+2\P(y^3)\big)%
    - 2\P(y) + y\P(y),
  \end{multline*}
  where $\P(y)$ is the generating function for Ptolemy diagrams with
  a distinguished base edge, and $\phi(d)$ is Euler's totient.
\end{Proposition}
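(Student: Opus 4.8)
The plan is to feed the dissymmetry identity of Proposition~\ref{pro:dissymmetry} into the enumeration machinery of the preceding Lemma. In our situation $R = Y + R_0$ with $R_0 = C_{\geq 3} + C_{\geq 4}$; this $R_0$ satisfies $\#R_0[\emptyset] = \#R_0[\{1\}] = \#R_0[\{1,2\}] = 0$ because its cycles have at least three vertices, so Proposition~\ref{pro:dissymmetry} applies and yields
\[
  b_R + B_{R'}^2 = (E_2 + R_0)\circ B_{R'} + Y\cdot B_{R'}.
\]
Recall that $b_R$ is the species of Ptolemy diagrams up to rotation, so $\widetilde{b_R}(y)$ is precisely the generating function we want, while $B_{R'}$ is the species of Ptolemy diagrams with a distinguished base edge; since such a diagram carries no non-trivial automorphism, its isomorphism type generating function coincides with the ordinary generating function $\P(y)$ computed in Section~\ref{sec:diagrams-base-edge}, i.e.\ $\widetilde{B_{R'}}(y) = \P(y)$, as noted before the statement. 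Passing to isomorphism type generating functions in the identity above, using $\widetilde{(\F+\G)} = \widetilde\F+\widetilde\G$, $\widetilde{(\F\G)} = \widetilde\F\,\widetilde\G$ and $\widetilde Y(y) = y$, I get
\[
  \widetilde{b_R}(y) = \widetilde{(E_2 + R_0)\circ B_{R'}}(y) + y\,\P(y) - \P(y)^2,
\]
so the whole problem reduces to evaluating the isomorphism type generating function of the composite $(E_2 + C_{\geq 3} + C_{\geq 4})\circ B_{R'}$.

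For this composite I would use the Lemma. Cycle index series are additive, and at that level $C_{\geq 3} = C - C_1 - C_2$ and $C_{\geq 4} = C - C_1 - C_2 - C_3$, where $C$ denotes the full species of cycles; combining this with $Z_{E_2} = Z_{C_2}$ gives
\[
  Z_{E_2 + C_{\geq 3} + C_{\geq 4}} = 2\,Z_C - 2\,Z_{C_1} - Z_{C_2} - Z_{C_3},
\]
where $Z_{C_1}(p_1,p_2,\dots) = p_1$ and $Z_C$, $Z_{C_2}$, $Z_{C_3}$ are as recorded in the Lemma. Since $\#B_{R'}[\emptyset] = 0$, the composition rule $\widetilde{\F\circ\G}(y) = Z_{\F}(\widetilde\G(y),\widetilde\G(y^2),\dots)$ from the Lemma applies; substituting $p_d \mapsto \widetilde{B_{R'}}(y^d) = \P(y^d)$ then gives
\begin{multline*}
  \widetilde{(E_2 + R_0)\circ B_{R'}}(y) = 2\sum_{d\geq 1}\frac{\phi(d)}{d}\log\left(\frac{1}{1-\P(y^d)}\right)\\
  - 2\P(y) - \frac{1}{2}\big(\P(y)^2 + \P(y^2)\big) - \frac{1}{3}\big(\P(y)^3 + 2\P(y^3)\big).
\end{multline*}
Plugging this into the displayed formula for $\widetilde{b_R}(y)$ and collecting the $\P(y)^2$-terms, so that $-\frac{1}{2}\P(y)^2 - \P(y)^2 - \frac{1}{2}\P(y^2)$ becomes $-\frac{1}{2}(3\P(y)^2 + \P(y^2))$, produces exactly the asserted expression.

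The argument is essentially bookkeeping once the framework built up in the preceding pages is granted, so I do not expect a serious obstacle; the points that need care are rather conceptual. First, it is $b_R$, not $B_{R'}$, whose isomorphism type generating function counts Ptolemy diagrams up to rotation, and the dissymmetry theorem is precisely the device that extracts $\widetilde{b_R}$ from $\widetilde{B_{R'}}$. Second, the identification $\widetilde{B_{R'}}(y) = \P(y)$ relies on a Ptolemy diagram equipped with a distinguished oriented base edge having trivial automorphism group, so that its exponential and isomorphism type generating functions agree and both equal the enumeration of Section~\ref{sec:diagrams-base-edge}. Third, one must correctly decompose the species $C_{\geq 3}$ and $C_{\geq 4}$ in terms of the full cycle species $C$ and the small cycles $C_1, C_2, C_3$ at the level of cycle index series; this is what brings $Z_C$, and hence Euler's totient $\phi$, into the final formula, which is the one genuinely non-elementary ingredient.
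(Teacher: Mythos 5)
Your proposal is correct and follows essentially the same route as the paper: apply Proposition~\ref{pro:dissymmetry} with $R_0 = C_{\geq 3} + C_{\geq 4}$, decompose $E_2 + R_0$ as $2C - 2C_1 - C_2 - C_3$ at the level of cycle index series, identify $\widetilde{B_{R'}}(y) = \P(y)$, and substitute via the composition rule from the Lemma. The only cosmetic difference is in justifying $\widetilde{B_{R'}} = \P(y)$: you argue via triviality of automorphisms of a based Ptolemy diagram, whereas the paper invokes the algebraicity of $\P(y)$; both are valid, and your version is arguably the more transparent.
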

The first few coefficients are given in the introduction.
\begin{proof}
  We use Proposition~\ref{pro:dissymmetry} with $R_0=C_{\geq
    3}+C_{\geq 4}$.  Since (formally) $E_2 + R_0 = C_{k\geq 2} +
  C_{k\geq 4}=2C - 2Y - E_2 - C_3$,
  \begin{align*}
    Z_{E_2 + R_0} = %
    2\sum_{d\geq 1}\frac{\phi(d)}{d}\log\left(\frac{1}{1-p_d}\right)%
    -2p_1-\frac{1}{2}(p_1^2 + p_2)-\frac{1}{3}(p_1^3 + 2p_3).
  \end{align*}

  Since $\P(y)$ is algebraic, we have $\widetilde{B_{R'}}=\P(y)$ and
  therefore
  \begin{align*}
    \widetilde b_R(y) =& Z_{E_2 + R_0}\big(\P(y), \P(y^2),\dots\big)
    +
    y\P(y) - \P(y)^2\\
    =&2\sum_{d\geq 1}\frac{\phi(d)}{d}%
    \log\left(\frac{1}{1-\P(y^d)}\right)\\%
    &-2\P(y)-\frac{1}{2}\big(\P(y)^2 +
    \P(y^2)\big)-\frac{1}{3}\big(\P(y)^3 + 2\P(y^3)\big)+ y\P(y) -
    \P(y)^2,
  \end{align*}
  which is equivalent to the claim.
\end{proof}

\end{document}